\documentclass[11pt]{article}
\usepackage{ragged2e}
\usepackage[dvipsnames]{xcolor}
\usepackage{amsfonts,latexsym,amssymb,amsthm,amsmath,graphicx,cases,comment}
\usepackage{amsmath}
\usepackage{paralist}
\usepackage{graphics} 
\usepackage{epsfig} 
\usepackage{mathrsfs}
\usepackage{epstopdf}
\usepackage{lipsum}
\usepackage{newpxtext,newpxmath}

\usepackage[titletoc,title]{appendix}
\usepackage{empheq}
\usepackage{cancel}
\usepackage{tikz}
\usetikzlibrary{arrows,shapes}
\usetikzlibrary{decorations.pathmorphing,decorations.pathreplacing}
\usetikzlibrary{calc,patterns,angles,quotes}
\usepackage{dsfont}
\usepackage{cases}

\usepackage{float}

\usepackage[colorlinks=true]{hyperref}
\hypersetup{urlcolor=blue, citecolor=blue}

\setlength{\topmargin}{0in}
\setlength{\textheight}{8.5in}
\setlength{\oddsidemargin}{-.1in}
\setlength{\textwidth}{6.7in}

\newtheorem{theorem}{Theorem}[section]
\newtheorem{lemma}[theorem]{Lemma}

\newtheorem{ass}[theorem]{Assumption}
\newtheorem{prop}[theorem]{Proposition}
\newtheorem{cor}[theorem]{Corollary}

\theoremstyle{definition}

\newtheorem{rmk}[theorem]{Remark}

\newcommand{\D}{\mathcal{D}}
\newcommand{\io}{\int_\Omega}
\newcommand{\ig}{\int_\Gamma}

\newcommand{\dv}[1]{\mbox{div}(#1)}

\newcommand{\n}[1]{\mathds{#1}}
\newcommand{\p}[1]{A^{1/2}{#1}}

\allowdisplaybreaks
\newcommand{\thistheoremname}{}

\newtheorem*{genericthm*}{\thistheoremname}
\newenvironment{namedthm*}[1]
{\renewcommand{\thistheoremname}{#1}%
	\begin{genericthm*}}
	{\end{genericthm*}}

\begin{document}
\title{Boundary Stabilization of the linear MGT equation with Feedback Neumann control}
	
	\author{
		 Marcelo Bongarti and Irena Lasiecka\\
		{\small Department of Mathematical Sciences}\\
		{\small University of Memphis}\\
		{\small Memphis, TN 38152 USA}\\
	}
	\maketitle
	\begin{abstract}
The Jordan--Moore--Gibson--Thompson (JMGT)\cite{christov_heat_2005,jordan_nonlinear_2008,straughan_heat_2014}  equation  is a benchmark model describing  propagation of nonlinear acoustic waves in heterogeneous fluids at rest. This is  a third order (in time) dynamics which accounts for a finite speed of propagation of heat signals (see \cite{coulouvrat_equations_1992,crighton_model_1979,jordan_nonlinear_2008,jordan_second-sound_2014,kaltenbacher_jordan-moore-gibson-thompson_2019}).  In this paper, we study a  boundary stabilization of  linearized version (also known as MGT--equation) in the {\it  critical case}, configuration in which the smallness of the diffusion effects leads to conservative dynamics \cite{kaltenbacher_wellposedness_2011}. Through a single measurement in {\it{feedback}} form made on a non--nempty, relatively open portion of the boundary under natural geometric conditions, we were able to obtain uniform exponential stability results that are, in addition, uniform with respect to the space--dependent viscoelasticity parameter which no longer needs to be assumed positive and in fact can be degenerate and  taken to be zero on the whole domain.This result, of independent interest in the area of boundary stabilization of MGT equations, provides a necessary first step for the study of optimal boundary feedback control on {\it infinite horizon} \cite{bucci_feedback_2019}.
	\end{abstract}
	\noindent{\bf keywords: } Nonlinear acoustics, second sound, third order in time,MGT equation, heat-conduction, boundary stabilization, degenerate viscoelasticity.
\section{Introduction}
Let $\Omega \in \mathbb{R}^3 $ bounded with $C^2$--boundary denoted by $\Gamma$. 
With $T>0$ (could also be $T = \infty$), the third-order (in time) quasilinear JMGT--equation is given by \begin{equation}\label{eqnl}\tau u_{ttt} + (\alpha-2ku)u_{tt} - c^2\Delta u - b \Delta u_t = 2ku_t^2 \ \ \mbox{in}  \ \ (0,T) \times \Omega \end{equation}
where $k>0$ is a nonlinearity parameter, $c>0$ denotes the speed of sound, $\tau > 0$ denotes thermal relaxation parameter, $b := \delta + \tau c^2 > 0$ where $\delta$ denotes the sound diffusivity and $\alpha$ refers to viscoelastic friction. The parameter  $\tau > 0$  -- introduced when the Fourier's law of heat conduction is replaced my the Maxwell--Catanneo's (MC) law -- plays an important role: it removes the paradox of infinite speed of propagation in heat waves. 

 A multitude of applications ranging from acoustics, image processing, thermodynamics, etc. have brought a  considerable  attention to the dynamics behind the hyperbolic (third--order in time) acoustic wave models. As a  consequence, a rich literature on the topic has been developed  during  the last decade \cite{alves_mooregibsonthompson_2018,
 bongarti_singular_2020,
 bongarti_vanishing_2020,
 boulaaras_galerkin_2019,
 bucci_cauchy-dirichlet_2020,
 bucci_regularity_2020,
 cavalcanti_long_2016,
 chen_nonexistence_2020,
 conejero_chaotic_2015,
 delloro_mooregibsonthompson_2016,
 delloro_note_2019,
 delloro_mooregibsonthompson_2017,
 kaltenbacher_mathematics_2015,
 lasiecka_mooregibsonthompson_2015,
 lasiecka_mooregibsonthompson_2016,
 liu_inverse_2014,
 liu_general_2020,
 nikolic_mathematical_2020,
 nikolic_jordan-moore-gibson-thompson_2020,
 pellicer_cauchy_2019,
 pellicer_optimal_2019,
 racke_global_2020}. The interest  behind the propagation of waves through viscous fluids and other heterogeneous media, has been already pointed out -- almost two centuries ago -- by Professor Stokes in his prominent article \cite{stokes_examination_1851} in 1851. Stokes' work was rooted on the idea that heat propagation -- in particular within acoustic media -- was hyperbolic. Later experimental studies indicated the presence of heat {\it waves} in such materials thereby dictating the presence of the (nowadays known as) thermal relaxation parameter $\tau > 0$.
 
 There is a strong connection between Stokes--model and the JMGT model, both  revealing the basic principles of acoustic waves propagation in the presence of  the heat waves. 
 This also led to  a creation  of other  models accounting for a more detailed information 
 about the material and the medium \cite{jordan_second-sound_2014,christov_heat_2005,straughan_heat_2014}. In  these models, 
 standard time derivative may be replaced by material derivative, which depends on the medium. Dependence of the heat flux upon the media, its surroundings and other thermal--material dependent quantities: thermal inertia, specific heat, velocity field, etc. are taken into account in the respective  modeling processes. 
  For a broader understanding of hyperbolic heat and modern nonlinear acoustics, we refer to  the works \cite{cattaneo_form_1958,cattaneo_sulla_2011,jordan_nonlinear_2008,jordan_second-sound_2014,lai_introduction_2010,straughan_heat_2014} and references therein. Recent review of pertinent  modeling aspects of  acoustic waves can also  be found in \cite{jordan_nonlinear_2008,kaltenbacher_mathematics_2015,m_jordan_introduction_2019}.
  In order to focus our work on boundary stabilization and the related technical details, 
  in the present work we consider $\tau > 0$  and fixed. However, other generalizations may be possible and, indeed, welcome. 

Typical boundary conditions associated with the model \eqref{eqnl}  and its linearization are homogenous Dirichlet $ u =0 $ imposed  on $\Gamma$.  Questions such as wellposedness of solutions and their stability were extensively studied \cite{kaltenbacher_exponential_2012,kaltenbacher_wellposedness_2011,kaltenbacher_wellposedness_2012,marchand_abstract_2012}.
With $k =0$ equation \eqref{eqnl} is linear and reads \begin{equation}\label{eqnlconst}\tau u_{ttt} + \alpha u_{tt} - c^2\Delta u - b \Delta u_t = 0 \ \ \mbox{in}  \ \ (0,T) \times \Omega \end{equation} (here we allow $\alpha \in \mathbb{R}$). The structurally damped case ($b > 0$) corresponds to a  group generator defined on the phase space
$H_0^1(\Omega) \times H_0^1(\Omega) \times L^2(\Omega)$.  In the absence of structural damping ($b =0$), however, semigroup generation fails and the problem is ill-posed, a property dating back to Fattorini \cite{fattorini_ordinary_1969}
in 1969. Nonlinear semigroups corresponding to \eqref{eqnl}  were  shown to exist in the  following phase spaces $H^2(\Omega) \cap H_0^1(\Omega) \times H_0^1(\Omega) \times L^2(\Omega)$, $~~[H^2(\Omega) \cap H_0^1(\Omega)]^2 \times H_0^1(\Omega)$.  First with the data assumed to be small at the  level  of the underlying phase space \cite{kaltenbacher_wellposedness_2012} and later with smallness required only in the lower topology, namely $H_0^1(\Omega) \times H_0^1(\Omega) \times L^2(\Omega)$ \cite{bongarti_singular_2020}.

The parameter $\gamma:= \alpha -  \dfrac{\tau 
c^2}{b}$  is critical for stability of solutions: both linear  and nonlinear semigroups are exponentially stable provided that $ \gamma>0$. When $\gamma =0$ the linear dynamics is conservative and therefore there is no decay of solutions. This brings us to the goal of the present paper, namely the {\it linear} stabilization: how to stabilize the model with critical $\gamma$-i.e $\gamma$ can be degenerate?  It is known by now that specifically constructed memory terms  may have stabilizing effects on the critical dynamics \cite{delloro_mooregibsonthompson_2016,delloro_mooregibsonthompson_2017,lasiecka_mooregibsonthompson_2016}. In this work we are  interested how to achieve  stabilization in the critical case  via a  boundary feedback only.It is known, that boundary of the region is accessible to external manipulations, hence a good place for placing actuators and sensors. Within this spirit, we  shall show that a suitable boundary feedback implemented on the boundary $\Gamma $ will lead to exponential stability of the  resulting semigroup. This result is important not only  on its own rights, but also within the context of the quasilinear  equations where  questions of stability are strongly linked to  decay properties  of linearized solutions \cite{kaltenbacher_wellposedness_2012}. 

 \subsection{The linearized PDE model with space--dependent viscoelasticity}
 
 Let the domain $\Omega \subset \n{R}^n (n = 2,3)$ with the  boundary $\Gamma = \partial \Omega$ be of class $C^2$. Let $\Gamma$  be  divided into two disjoint parts, $\Gamma_0$ and $\Gamma_1$, both non-empty, with $\Gamma_1$ relatively open in $\Gamma.$ We consider the linear version of \eqref{eqnl} but  with a space--dependent coeficient $\alpha \in C(\overline{\Omega})$ 
 \begin{equation}\label{eqnl1}\tau u_{ttt} + \alpha(x) u_{tt} - c^2\Delta u - b \Delta u_t = 0 \ \ \mbox{in}  \ \ (0,T) \times \Omega \end{equation}
 and subject to the Robin-\emph{type} (on $\Gamma_0$) and Dirichlet (on $\Gamma_1$) boundary conditions \begin{equation} \label{BC}
\dfrac{\partial u}{\partial \nu} +\eta u_t = 0 \ \ \mbox{on} \ \ (0,T) \times \Gamma_0 \ \ \mbox{and} \ \ u= 0 \ \ \mbox{on} \ \ (0,T) \times \Gamma_1
\end{equation} where $\eta > 0$, and the initial conditions are  given by \begin{equation} \label{IC}
u(0)=u_0,u_t(0)=u_1, u_{tt}(0) = u_2.
\end{equation}
In addition to continuity, we assume that the damping coefficient $\alpha(x) > 0$ is such that stability parameter $\gamma \in C(\overline{\Omega})$ satisfies
\begin{equation}\label{gamma}
\gamma(x):=\alpha (x) -\frac{\tau c^2}{b} \geqslant 0.
\end{equation}
\begin{rmk}
The above assumption will be used only for the stability estimates. The generation of a semigroup  is valid without assuming (\ref{gamma}).
\end{rmk}Since the damping typically depends on local properties of the material, assuming variability of the damping  $\alpha(x)$ is physically relevant and, in most cases, necessary. 
We shall show that under suitable geometric conditions imposed on $\Omega$ the linear system is exponentially  stable in the topology of the natural phase space.

\

\noindent{\bf Notation:} Throughout this paper, $L^2(\Omega)$ denotes the space of Lebesgue measurable functions whose squares are integrable and $H^s(\Omega)$ denotes the $L^2(\Omega)$-based Sobolev space of order $s$. Moreover, we use the notation $H_{\Gamma}^1(\Omega)$ to represent the space \begin{equation}
H_\Gamma^1(\Omega) := \left\{u \in H^1(\Omega); u\rvert_\Gamma = 0\right\}
\end{equation} instead of the standard $H_0^1(\Omega)$ in order to emphasize the portion of the boundary on what the trace is vanishing . We also denote by $H_\Gamma^2(\Omega)$ the space $H^2(\Omega) \cap H_{\Gamma}^1(\Omega).$ We denote the inner product in $L^2(\Omega)$ and $L^2(\Gamma)$ respectively by $$(u,v) = \int_\Omega uvd\Omega \ \ \mbox{and} \ \ (u,v)_\Gamma = \int_\Gamma uvd\Gamma$$ and the respective induced norms in $L^2(\Omega)$ and $L^2(\Gamma)$ are denoted by $\|\cdot\|_2$ and $\|\cdot\|_\Gamma$ respectively. 

\subsection{Main results and discussion}
We begin with the abstract version of equation \eqref{eqnl1}.To this end, let $A: \D(A) \subset L^2(\Omega) \to L^2(\Omega)$ be the operator defined as \begin{equation}\label{oplap} A\xi = -\Delta \xi,  \ \ 
\D(A) = \left\{\xi \in H^2(\Omega); \  \xi\biggr\rvert_{\Gamma_1} = \dfrac{\partial \xi}{\partial \nu} \biggr \rvert_{\Gamma_0} \equiv 0 \right\}.
\end{equation} It is well known that $A$ is a positive ($\Gamma_1 \ne 0$),  self-adjoint operator with compact resolvent and that $\D\left(\p\ \right) = H_{\Gamma_1}^1(\Omega)$ (equivalent norms). In addition, up to a bit of abuse of notation we denote (also) by $A: L^2(\Omega) \to [\D(A)]'$ the extension (by duality) of the operator $A: \D(A) \subset L^2(\Omega) \to L^2(\Omega)$ defined in \eqref{oplap}.

Next, for $\varphi \in L^2(\Gamma_0)$, let $\psi = N(\varphi)$ be the unique solution of the elliptic problem \begin{equation*}
\begin{cases}
\Delta\psi = 0 \ & \mbox{in} \ \Omega \\ \dfrac{\partial \psi}{\partial \nu} = \varphi \ & \mbox{on} \ \Gamma_0 \\ \psi = 0 \ & \mbox{on} \ \Gamma_1.
\end{cases}
\end{equation*} It follows from elliptic theory that $N \in \mathcal{L}(H^s(\Gamma_0),H^{s+3/2}(\Omega))$\footnote{$\mathcal{L}(X,Y)$ denote the space of linear bounded operators from $X$ to $Y$} $(s \in \mathbb{R}_+^\ast)$ and \begin{equation} \label{neq} N^\ast A \xi = \begin{cases}
\xi \ \mbox{on} \ &\Gamma_0 \\ 0 \ \mbox{on} \ &\Gamma_1
\end{cases}\end{equation} for all $\xi \in \D(A)$, where $N^\ast$ represents the adjoint of $N$
when the latter is considered as an operator from $L_2(\Gamma_0) $ to $L_2(\Omega) $.

The introduction of $A$ and $N$ will allow us to write equation \eqref{eqnl1} abstractly as \begin{align}\label{absver}
\tau u_{ttt} + \alpha(x) u_{tt} + c^2A(u + \eta NN^\ast A u_t) + bA(u_t + \eta \eta NN^\ast Au_{tt}) = f.
\end{align}

The abstract version of our model gives rise to the natural phase space we are going to consider. We define $\mathbb{H}$ as \begin{equation}\label{ph-sp}\mathbb{H} := H_{\Gamma_1}^1(\Omega) \times H_{\Gamma_1}^1(\Omega) \times L^2(\Omega)\end{equation}

The computations leading to \eqref{absver} show formally that $u$ is a solution of \eqref{eqnl1} with boundary condition given by \eqref{BC} if and only if $\Phi = (u,u_t,u_{tt})^\top$ is a solution for the first other system \begin{equation} \begin{cases}
\label{usist} \Phi_t = \mathscr{A}\Phi + F\\ 
\Phi(0) = \Phi_0 = (u_0,u_1,u_2)^\top,\end{cases}
\end{equation} with $\mathscr{A}:\D(\mathscr{A}) \subset \mathbb{H} \to \mathbb{H}$ given by\begin{equation}\label{opuA}
\mathscr{A}(\xi_1,\xi_2,\xi_3)^\top := (\xi_2,\xi_3,-\alpha\tau^{-1}\xi_3-c^2\tau^{-1} A(\xi_1+ \eta NN^\ast A\xi_2) -b\tau^{-1}A(\xi_2 + \eta NN^\ast A\xi_3))
\end{equation} where {\footnotesize \begin{align}
\label{domA} \D(\mathscr{A}) &= \left\{(\xi_1,\xi_2,\xi_3)^\top \in \mathbb{H};    \ \xi_3 \in \D\left(\p\ \right), \ \xi_1 + \eta NN^\ast A\xi_2 \in \D(A), \ \xi_2 + \eta NN^\ast A\xi_3 \in \D(A)\right\} \nonumber \\ &= \left\{(\xi_1,\xi_2,\xi_3)^\top \in \left[H_{\Gamma_1}^2(\Omega)\right]^2 \times \D \left(\p\ \right); \ \left[\dfrac{\partial \xi_1}{\partial \nu} + \eta\xi_2\right] \biggr\rvert_{\Gamma_0}= 
	\left[\dfrac{\partial \xi_2}{\partial \nu} + \eta\xi_3\right]\biggr\rvert_{\Gamma_0} =0
\right\}
\end{align}} and $F^\top =(0,0,f).$

We now recall that, topologically, the space $\mathbb{H}$ is equivalent to 
\begin{equation}\label{lowen}\D\left(\p\ \right) \times \D\left(\p\ \right) \times L^2(\Omega)\end{equation} with the topology induced by the inner product \begin{equation}
\label{inH} \left((\xi_1,\xi_2,\xi_3)^\top,(\varphi_1, \varphi_2, \varphi_3)\right)_{\mathbb{H}} = (\p \xi_1,\p \varphi_1) + b(\p\xi_2,\p \varphi_2) + (\xi_3,\varphi_3),
\end{equation} for all $(\xi_1,\xi_2,\xi_3)^\top, (\varphi_1,\varphi_2,\varphi_3)^\top \in \mathbb{H}.$ Because of this equivalence we will be using the same $\mathbb{H}$ to denote both spaces.

We are then in position to state  our wellposedness of the phase space solutions. 
\begin{theorem}\label{t1}
Assume $f \in L^1((0,T),L^2(\Omega))$, and $\eta\geq 0 $, $\alpha \in C(\Omega)$ . For every initial data $\Phi_0 :=(u_0,u_1,u_2)$ in $\mathbb{H}$,
there exists a unique semigroup solution $\Phi =(u,u_t,u_{tt}) $ such that 
$\Phi \in C([0,T], \mathbb{H}) $ for every $T > 0.$ Moreover, if the initial datum belongs to $\D(\mathscr{A})$ and $f \in C^1([0,T],L^2(\Omega))$ the corresponding solution is in $C((0,T]; D(\mathscr{A})) \cap C^1([0,T], \mathbb{H})$.
\end{theorem}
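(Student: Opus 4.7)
The plan is to recast the system as a first-order Cauchy problem $\Phi_t=\mathscr{A}\Phi+F$ on $\mathbb{H}$, show via Lumer--Phillips that $\mathscr{A}$ generates a $C_0$-semigroup on $\mathbb{H}$, and then invoke the standard variation-of-parameters/strong-solution machinery to treat the inhomogeneous forcing $f\in L^1((0,T);L^2(\Omega))$. Density of $\mathcal{D}(\mathscr{A})$ in $\mathbb{H}$ is immediate, since any triple from $[C_c^\infty(\Omega)]^3$ satisfies every boundary condition appearing in \eqref{domA} trivially.

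The heart of the argument is quasi-dissipativity of $\mathscr{A}$ in the inner product \eqref{inH}. For $\Phi=(\xi_1,\xi_2,\xi_3)^\top\in\mathcal{D}(\mathscr{A})$, the harmonic corrector $NN^\ast A\xi_2$ cancels the Laplacian, so $A(\xi_1+\eta NN^\ast A\xi_2)=-\Delta\xi_1$ as elements of $L^2(\Omega)$; integrating by parts against $\xi_3\in H^1_{\Gamma_1}(\Omega)$ and using $\partial_\nu\xi_1+\eta\xi_2=0$ on $\Gamma_0$ yields
\[
(A(\xi_1+\eta NN^\ast A\xi_2),\xi_3)=(A^{1/2}\xi_1,A^{1/2}\xi_3)+\eta(\xi_2,\xi_3)_{\Gamma_0},
\]
and an analogous identity with $\xi_3$ in place of $\xi_2$ in the right slot. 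Substituting into $(\mathscr{A}\Phi,\Phi)_{\mathbb{H}}$ produces the coercive boundary dissipation $-\eta b\tau^{-1}\|\xi_3\|_{L^2(\Gamma_0)}^2\le 0$; the bulk cross-terms in $A^{1/2}\xi_1,A^{1/2}\xi_2,A^{1/2}\xi_3$, the zero-order $\alpha$-term, and the indefinite boundary cross-term $\eta(\xi_2,\xi_3)_{\Gamma_0}$ can be absorbed by Young's and the trace inequalities into $\omega\|\Phi\|_{\mathbb{H}}^2$ for some $\omega>0$.

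For the range condition, I would fix $\lambda>\omega$ and $G=(g_1,g_2,g_3)^\top\in\mathbb{H}$ and solve $(\lambda I-\mathscr{A})\Phi=G$ by eliminating $\xi_2=\lambda\xi_1-g_1$ and $\xi_3=\lambda\xi_2-g_2$ from the first two rows, so the third row collapses to a single coercive elliptic problem for $\xi_1$ of the form $p(\lambda)\xi_1+q(\lambda)A\xi_1=h$ with mixed Robin/Dirichlet boundary conditions (the Robin trace data being inherited through the identity $N^\ast A\xi=\xi|_{\Gamma_0}$). Solvability in $H^2(\Omega)\cap H^1_{\Gamma_1}(\Omega)$ follows from Lax--Milgram together with standard elliptic regularity for $C^2$ boundaries, and the remaining components of $\Phi$ are recovered algebraically; one then checks that $\Phi\in\mathcal{D}(\mathscr{A})$ by reversing the elimination.

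Combining dissipativity with the range condition, Lumer--Phillips yields the generation of a $C_0$-semigroup $\{e^{t\mathscr{A}}\}_{t\ge 0}$ on $\mathbb{H}$; the mild solution $\Phi(t)=e^{t\mathscr{A}}\Phi_0+\int_0^t e^{(t-s)\mathscr{A}}F(s)\,ds$ lies in $C([0,T];\mathbb{H})$ for every $f\in L^1((0,T);L^2(\Omega))$, and the upgrade to $C([0,T];\mathcal{D}(\mathscr{A}))\cap C^1([0,T];\mathbb{H})$ for $\Phi_0\in\mathcal{D}(\mathscr{A})$ and $f\in C^1([0,T];L^2(\Omega))$ is the standard classical-solution result. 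The principal technical obstacle is the dissipativity computation: the nonlocal feedback operator $NN^\ast A$ mixes interior and boundary scales, and it takes careful use of the harmonic-extension identity together with the Robin conditions built into $\mathcal{D}(\mathscr{A})$ to extract a single coercive boundary term on $\Gamma_0$ while controlling every remaining contribution as a lower-order $\omega$-perturbation.
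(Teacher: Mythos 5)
Your overall architecture (Lumer--Phillips for $\mathscr{A}$, then variation of parameters) matches the paper's intent, but the central step --- quasi-dissipativity of $\mathscr{A}$ itself in the inner product \eqref{inH} --- does not go through as you describe, and this is precisely the known structural obstruction for third-order-in-time MGT dynamics. Carrying out your own computation on $\D(\mathscr{A})$ (take $\tau=1$): the first two slots of $(\mathscr{A}\Phi,\Phi)_{\mathbb{H}}$ contribute $(\p{\xi_2},\p{\xi_1})+b(\p{\xi_3},\p{\xi_2})$, and the third slot contributes $-(\alpha\xi_3,\xi_3)-c^2(\p{\xi_1},\p{\xi_3})-c^2\eta(\xi_2,\xi_3)_{\Gamma_0}-b(\p{\xi_2},\p{\xi_3})-b\eta\|\xi_3\|_{\Gamma_0}^2$. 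The $b(\p{\xi_3},\p{\xi_2})$ terms cancel, the boundary cross term is absorbed by the good term $-b\eta\|\xi_3\|_{\Gamma_0}^2$ via Young and trace, and $(\p{\xi_2},\p{\xi_1})$ is harmless --- but the term $-c^2(\p{\xi_1},\p{\xi_3})$ survives and \emph{cannot} be dominated by $\omega\|\Phi\|_{\mathbb{H}}^2$: the phase space $\mathbb{H}$ controls $\xi_3$ only in $L^2(\Omega)$, so neither $\|\p{\xi_3}\|_2$ nor $\|A\xi_1\|_2$ is available, and no Young/trace manipulation rescues a pairing of $\p{\xi_1}$ against $\p{\xi_3}$. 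Your claim that ``the bulk cross-terms \dots can be absorbed into $\omega\|\Phi\|_{\mathbb{H}}^2$'' is therefore false at exactly the point you flag as the principal obstacle. The paper's proof exists to get around this: it conjugates by the isomorphism $M$ induced by the Marchand--McDevitt--Triggiani substitution $bz=bu_t+c^2u$, writes $\mathbb{A}=M\mathscr{A}M^{-1}=\mathbb{A}_d+P$ with $P$ bounded, and checks that $\mathbb{A}_d$ is \emph{exactly} dissipative because in the $z$-variables the dangerous $c^2A\xi_1$ pairing against the third component disappears (the first equation decouples into $\xi_2-\frac{c^2}{b}\xi_1$); maximality is then a one-line positivity argument for $K_\lambda=(\lambda^2+\lambda)A^{-1}+b+b\lambda\eta NN^\ast A$. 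Some such device (a change of variables, equivalently a carefully adapted equivalent inner product) is unavoidable here.

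Two secondary points. First, your density argument is also incorrect: the $H^1$-closure of $C_c^\infty(\Omega)$ is $H_0^1(\Omega)$, which is a proper subspace of $H_{\Gamma_1}^1(\Omega)$ (elements of the latter may have nonzero trace on $\Gamma_0$); density of $\D(\mathscr{A})$ should instead be extracted from maximal dissipativity of the transformed generator. Second, your elimination for the range condition, reducing to ``$p(\lambda)\xi_1+q(\lambda)A\xi_1=h$,'' drops the nonlocal boundary-feedback contribution $\eta\lambda(c^2+b\lambda)ANN^\ast A\xi_1$ and produces a right-hand side containing $Ag_1$ and $ANN^\ast Ag_1$, which live only in $[\D(A)]'$ for $g_1\in\D(\p{})$; this can still be handled variationally by Lax--Milgram on $H_{\Gamma_1}^1(\Omega)$ with a Robin boundary form, but the recovery of $\D(\mathscr{A})$-membership then requires the same kind of bookkeeping the paper performs with $K_\lambda^{-1}$, not classical $H^2$ elliptic regularity for $\xi_1$ alone.
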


Our main results is exponential stability of the solutions refereed to in Theorem \ref{t1}.
In order to formulate the result we need to impose geometric condition:
\begin{ass}\label{asgeo}
\begin{enumerate}
	\item we choose a point $x_0 \in \n{R}^n$ \emph{outside} of $\overline{\Omega}.$
	and  we define the vector field $h: \mathds{R}^n \to \n{R}^n $ given by $h(x) = x - x_0$.
  With $\nu(x)$ denoting the outwards normal unit vector at $x$ we define $\Gamma_0,\Gamma_1 \subseteq \Gamma$ by $$\Gamma_0 = \{x \in \Gamma; \ \nu(x)\cdot h(x) > 0\},~~~~\Gamma_1 = \{x \in \Gamma; \ \nu(x)\cdot h(x) \leqslant 0\}.$$ 
\end{enumerate} 
\end{ass}
\begin{rmk}
The geometric condition imposed above can be substantially relaxed. For instance, if  the feedback control is active on the full $\Gamma$, there is no need for any geometric coonditions. However, this would  require some microlocal analysis and becomes rather technical  \cite{lasiecka_uniform_1993-1,lasiecka_uniform_1992,tataru_regularity_1998}. For this reason we opted for a more restrictive geometry, as to make the exposition fully independent. \end{rmk}
\begin{theorem}\label{t2}
Let Assumption \ref{asgeo} and condition \eqref{gamma} be satisfied. Then, there exist $\omega > 0$ and $M>0$ such that \begin{equation}\label{exp} \|\Phi(t)\|_{\mathbb{H}} \leqslant Me^{-\omega t}\|\Phi_0\|_\mathbb{H}\end{equation} for all $t \geqslant 0.$
\end{theorem}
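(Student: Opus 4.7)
The plan is to prove the exponential decay by the classical energy/multiplier method combined with the Datko--Pazy theorem, after reducing the third-order equation to a wave equation. The structural observation is that, setting $z := u_t + \tfrac{c^2}{b}u$, equation \eqref{eqnl1} becomes
$$\tau z_{tt} - b\,\Delta z = -\gamma(x)\,u_{tt} \qquad \text{in } (0,T)\times\Omega,$$
with $z = 0$ on $\Gamma_1$ and $\tfrac{\partial z}{\partial\nu} = -\eta\,z_t$ on $\Gamma_0$---a wave equation with dissipative Robin feedback and a nonnegative (possibly degenerate) internal damping. The modified energy
$$\mathcal{E}(t) := \tfrac{\tau}{2}\|z_t\|_2^2 + \tfrac{b}{2}\|\p z\|_2^2 + \tfrac{c^2}{2b}\io \gamma(x)\,u_t^2\,dx$$
is equivalent, together with $\|\p u\|_2^2$, to $\|\Phi(t)\|_{\mathbb{H}}^2$, and testing the transformed equation against $z_t$ while using the boundary conditions yields the dissipation identity
$$\tfrac{d}{dt}\mathcal{E}(t) + \io \gamma(x)\,u_{tt}^2\,dx + b\eta \int_{\Gamma_0} |z_t|^2\,d\Gamma = 0.$$
In particular $\mathcal{E}$ is non-increasing, and the Neumann feedback on $\Gamma_0$ is the only guaranteed dissipation in the critical regime $\gamma\equiv 0$.

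Next I would derive an observability-type inequality by applying the radial multiplier $Mz := h(x)\!\cdot\!\nabla z + \tfrac{n-1}{2}z$ to the transformed wave equation and integrating over $(0,T)\times\Omega$. The interior contribution produces $\int_0^T \mathcal{E}(t)\,dt$, up to the energy boundary terms $[\mathcal{E}]_0^T$ controlled by $\mathcal{E}(0)-\mathcal{E}(T)$; the geometric Assumption \ref{asgeo} arranges the $\Gamma$-flux favorably, since $h\!\cdot\!\nu \leqslant 0$ on $\Gamma_1$ (where the Dirichlet condition leaves only the sign-good term $|\tfrac{\partial z}{\partial\nu}|^2(h\!\cdot\!\nu)$) and $h\!\cdot\!\nu > 0$ on $\Gamma_0$ (where the feedback replaces $\tfrac{\partial z}{\partial\nu}$ by $-\eta z_t$, hence controlled by the dissipation). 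The forcing $-\gamma(x)u_{tt}$ paired with $Mz$ is absorbed by Cauchy--Schwarz against $\io \gamma u_{tt}^2\,dx$. The resulting estimate has the form
$$\int_0^T \mathcal{E}(t)\,dt \leqslant C_T\bigl(\mathcal{E}(0)-\mathcal{E}(T)\bigr) + C_T\cdot\mathrm{l.o.t.}(u,u_t,z),$$
with lower-order norms in spaces compactly embedded in the energy space.

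The main obstacle is absorbing these lower-order terms in the critical regime $\gamma\equiv 0$, where no internal damping can aid the estimate. I would deal with this by the standard compactness-uniqueness argument of Lions--Lasiecka--Triggiani: if the bound fails, a suitably normalized subsequence of solutions converges, thanks to the compactness of the lower-order embeddings and the regularity provided by Theorem \ref{t1}, to a nontrivial limit $\Phi^\ast$ whose second component $z^\ast$ satisfies $\tau z^\ast_{tt} - b\Delta z^\ast = 0$ with both $z^\ast = 0$ and $\tfrac{\partial z^\ast}{\partial\nu} = 0$ on $\Gamma_0\times(0,T)$, and homogeneous Dirichlet on $\Gamma_1$. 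For $T$ larger than the natural geometric time associated with the field $h$, a Holmgren/Carleman-type unique continuation result forces $z^\ast \equiv 0$ on $(0,T)\times\Omega$, and the relation $z = u_t + \tfrac{c^2}{b}u$ together with the initial data propagates this back to $\Phi^\ast \equiv 0$, contradicting the normalization. Once the lower-order terms are absorbed, the resulting bound $\int_0^\infty \|\Phi(t)\|_{\mathbb{H}}^2\,dt \leqslant C\|\Phi_0\|_{\mathbb{H}}^2$ combined with the semigroup from Theorem \ref{t1} yields \eqref{exp} via the Datko--Pazy theorem, with rate $\omega$ and constant $M$ independent of how small or degenerate $\gamma$ is.
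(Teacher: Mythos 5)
Your overall skeleton coincides with the paper's: the substitution $z=u_t+\tfrac{c^2}{b}u$ turning \eqref{eqnl1} into a damped wave equation for $z$ with forcing $-\gamma u_{tt}$, the dissipation identity for the modified energy, the radial multiplier $h\cdot\nabla z+\tfrac{n-1}{2}z$, and the conclusion via Datko. The substantive divergence is your treatment of lower-order terms, and here I think you have created a problem the paper does not have. In the paper's computation (Proposition \ref{id} and Theorem \ref{tdatko}), the combination of \eqref{hzmult} with $\tfrac{n-1}{2}$ times \eqref{zmul} leaves \emph{no time-integrated lower-order terms at all}: the only ``lower-order'' contributions are endpoint terms such as $(z_t,z)\big\rvert_s^t$ and $(z_t,h\nabla z)\big\rvert_s^t$, which are bounded by $E_1(s)+E_1(t)\lesssim E_1(s)$ using Poincar\'e's inequality (available because $\Gamma_1\neq\emptyset$) and the monotonicity coming from \eqref{e1id}; the coupling term $(\gamma u_{tt},h\nabla z)$ is split by Cauchy--Schwarz and absorbed using the dissipation $\int_s^t\|\gamma^{1/2}u_{tt}\|_2^2\,d\sigma\lesssim E_1(s)$. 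Consequently the observability estimate closes directly, with explicit constants independent of $\gamma\geqslant 0$. Your compactness--uniqueness step is therefore unnecessary, and, as you have set it up, it undercuts your own final claim: a contradiction/unique-continuation argument is non-constructive and is run for a \emph{fixed} coefficient $\gamma$, so it cannot deliver the assertion that $M$ and $\omega$ are ``independent of how small or degenerate $\gamma$ is.'' Moreover the unique continuation you invoke is not quite off-the-shelf: the limit system is not the free wave equation for $z^\ast$ alone but remains coupled to $u^\ast$ through $bz^\ast=bu_t^\ast+c^2u^\ast$ and the residual term $\gamma u_{tt}^\ast$; you would first have to argue that the vanishing of the dissipation functionals along the minimizing sequence kills that term in the limit.

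A second, smaller omission: your observability inequality controls only $\int\mathcal{E}$, i.e.\ the $(z,z_t)$ part of the energy, whereas Datko must be applied to $\|\Phi\|_{\mathbb{H}}^2$, which also contains $\|A^{1/2}u\|_2^2$ and $\|u_{tt}\|_2^2$. You acknowledge that $\mathcal{E}$ ``together with $\|A^{1/2}u\|_2^2$'' is equivalent to the full norm, but you never estimate $\int_s^t\|A^{1/2}u\|_2^2\,d\sigma$. The paper does this in Step 4 of Theorem \ref{tdatko} by viewing $u_t+\tfrac{c^2}{b}u=z$ as a first-order ODE in $t$ with exponentially stable homogeneous part, which yields $\|A^{1/2}u(t)\|_2^2+\int_s^t\|A^{1/2}u\|_2^2\,d\sigma\lesssim E(s)$ from $\int_s^t\|A^{1/2}z\|_2^2\,d\sigma$; the terms $\|u_t\|$ and $\|u_{tt}\|$ then follow algebraically from $u_t=z-\tfrac{c^2}{b}u$ and $u_{tt}=z_t-\tfrac{c^2}{b}u_t$. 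You should add this step; without it the passage from your $z$-estimates to $\int_0^\infty\|\Phi(t)\|_{\mathbb{H}}^2\,dt<\infty$ is not justified.
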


   As mentioned earlier, the {\it wellposedness} of the MGT equation with homogenous boundary conditions is well known by now \cite{kaltenbacher_wellposedness_2011,kaltenbacher_wellposedness_2012,marchand_abstract_2012}. 
    However, in the  case of non-homogeneous boundary conditions, the situation is much more complicated due to the fact that "wave" operator with Neumann boundary data does not satisfy the Lopatinski condition \cite{sakamoto_hyperbolic_1982,tataru_regularity_1998}, unless $\Omega$ is one dimensional. This leads to a {\it loss} of  $1/3$
derivative when looking at the map from the boundary with $L_2$ data into the $H^1\times L^2$ 
solutions. This has been known for some time in the case of wave equation, but only recently 
studied for MGT equation \cite{bucci_cauchy-dirichlet_2020,bucci_regularity_2020,triggiani_sharp_2020}. In fact, an open loop control problem for MGT equation with $L^2$--Neumann  controls leads to only distributional solutions \cite{bucci_feedback_2019}. 
However, for both the wave and MGT equations, a {\it boundary feedback via Neumann boundary conditions does recover this lo}  leading to the  recuperation of the full energy. This is due to the  boundary  dissipativity with  $\eta \geq 0$. 
 The main mathematical issue in dealing with the  nonhomogenous Neumann boundary data is  to deal  with {\it unbounded and uncloseable} perturbations within the context of the third order hyperbolic dynamics.  In fact, this is the  first result on wellposedness of feedback generator  for  MGT dynamics. 
Also, notice that applying feedback $\frac{\partial u}{\partial \nu} +\eta u_t=0$, with $\eta <0$  leads to the  ill-posed dynamics.
This corresponds to anti--damping which shifts the spectrum to a "wrong" complex half--plane-thus denying wellposedness of a semigroup. While  the analog of  stability result in Theorem \ref{t2} is known for the case of wave equation, this result  is new for the MGT ($\tau > 0$) with critical stability parameter  $\gamma$. The difficulties encountered in the proof of wellposedness in Theorem \ref{t1} are compounded, when proving Theorem \ref{t2}, by geometric considerations necessary when studying dissipation with restricted geometric support (such as  portion of the boundary).Geometric condition assumed in  Assumption (\ref{asgeo})  can be, however,  substantially relaxed. For instance, when the dissipation is active on the full boundary there is no need for any geometric constraints \cite{lasiecka_uniform_1993-1,lasiecka_uniform_1992}. However, this brings forward microlocal analysis arguments which are known by now, however tedious. In order to ease readability and focus of the analysis, we have opted for a more restrictive version of geometric assumption.  

    It should be noted that the result of Theorem \ref{t2} is critical when studying optimal boundary feedback control problem for MGT equation. While the feedback synthesis for this model has been carried out in \cite{bucci_feedback_2019}  for a finite horizon problem, analysis of {\it infinite horizon} problem requires stabilizability condition, which is provided by Theorem \ref{t2}.

\section{Wellposedness: proof of Theorem \ref{t1}}

The thermal relaxation parameter $\tau > 0$ plays no significant role in the study of the wellposedness of \eqref{eqnl1}, therefore for the sake of readability we are assuming $\tau = 1$ in this section. 

The main goal of this section is to prove that the operator $\mathscr{A}: \D(\mathscr{A}) \subset \mathbb{H} \to \mathbb{H}$ generates a strongly continuous semigroup.
It is convenient to introduce the following  change of variables $bz = bu_t + c^2 u$ (see \cite{marchand_abstract_2012}) which reduces the problem to a PDE- abstract ODE coupled system.

Let $M \in \mathcal{L}(\mathbb{H})$ defined by $$M(\xi_1,\xi_2,\xi_3)^\top = \left(\xi_1,\xi_2 + \dfrac{c^2}{b}\xi_1, \xi_3 + \dfrac{c^2}{b}\xi_2\right)$$ which has inverse $M^{-1}\in \mathcal{L}(\mathbb{H})$ given by $$M^{-1}(\xi_1,\xi_2,\xi_3)^\top = \left(\xi_1,\xi_2-\dfrac{c^2}{b}\xi_1,\xi_3 - \dfrac{c^2}{b}\xi_2 + \dfrac{c^4}{b^2}\xi_1\right)$$ and therefore is an isomorphism of $\mathbb{H}$. The next lemma makes precise the translation of \eqref{eqnl1} to the system involving $z.$\begin{lemma} \label{equiv_prob}  Assume that the compatibility condition \begin{equation}\label{comp} \dfrac{\partial}{\partial \nu} u_0 + \eta u_1 = 0 \ \emph{\mbox{on}} \ \Gamma_0
	\end{equation} holds.
	Then $\Phi \in C^1(0,T;\mathbb{H})\cap C(0,T;\D(\mathscr{A}))$ is a strong solution for \eqref{usist} if, and only if, $\Psi= M\Phi \in C^1(0,T;\mathbb{H})\cap C(0,T;\D(\mathbb{A}))$ is a strong solution for  \begin{equation} \begin{cases}
	\label{usistz} \Psi_t = \mathbb{A}\Psi + G\\ 
	\Psi(0) = \Psi_0 = M\Phi_0 =  \left(u_0,u_1 + \dfrac{c^2}{b}u_0,u_2 + \dfrac{c^2}{b}u_1\right)^\top,\end{cases}
	\end{equation} where $G = MF$ and $\mathbb{A} = M\mathscr{A}M^{-1}$ with \begin{equation}\label{domz}
	\D(\mathbb{A}) = \left\{(\xi_1,\xi_2,\xi_3)^\top \in \left[H_{\Gamma_1}^2(\Omega)\right]^2 \times \D \left(\p\ \right);
	\left[\dfrac{\partial \xi_2}{\partial \nu} + \eta\xi_3\right]\biggr\rvert_{\Gamma_0} =0
	\right\}
	\end{equation}
\end{lemma}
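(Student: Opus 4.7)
The plan is to exploit that $M \in \mathcal{L}(\mathbb{H})$ is a topological isomorphism, so that the regularity assertions $\Phi \in C^1([0,T];\mathbb{H})$ and $\Psi = M\Phi \in C^1([0,T];\mathbb{H})$ are equivalent, and likewise $G = MF$ inherits the regularity of $F$. Applying $M$ to the identity $\Phi_t = \mathscr{A}\Phi + F$ and inserting $I = M^{-1}M$ gives $\Psi_t = M\mathscr{A}M^{-1}\Psi + MF = \mathbb{A}\Psi + G$ as an identity in $\mathbb{H}$ whenever both sides make sense. The entire substance of the lemma therefore reduces to matching the domains, i.e.\ showing that $\Phi(t) \in \D(\mathscr{A})$ for all $t$ if and only if $\Psi(t) \in \D(\mathbb{A})$ for all $t$, modulo the compatibility condition \eqref{comp}.

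For the forward direction, given $(\xi_1,\xi_2,\xi_3)^\top \in \D(\mathscr{A})$, write $(\eta_1,\eta_2,\eta_3)^\top = M(\xi_1,\xi_2,\xi_3)^\top$. Since $M$ is triangular with constant entries, the Sobolev regularity $[H^2_{\Gamma_1}(\Omega)]^2 \times \D(A^{1/2})$ is preserved. A short computation gives
\[
\frac{\partial \eta_2}{\partial \nu} + \eta\,\eta_3 \;=\; \Bigl[\frac{\partial \xi_2}{\partial \nu} + \eta\,\xi_3\Bigr] + \frac{c^2}{b}\Bigl[\frac{\partial \xi_1}{\partial \nu} + \eta\,\xi_2\Bigr] \;=\; 0 \quad \text{on } \Gamma_0,
\]
confirming $M\Phi(t) \in \D(\mathbb{A})$. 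The compatibility condition \eqref{comp} for the initial datum is then automatic since it is embedded in $\Phi(0) \in \D(\mathscr{A})$.

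The converse direction is the delicate step. Starting from $(\eta_1,\eta_2,\eta_3)^\top \in \D(\mathbb{A})$, the image $(\xi_1,\xi_2,\xi_3)^\top = M^{-1}(\eta_1,\eta_2,\eta_3)^\top$ has the right Sobolev regularity, but $\D(\mathbb{A})$ imposes only \emph{one} Robin-type boundary condition while $\D(\mathscr{A})$ demands two. A direct calculation yields the algebraic relation
\[
\frac{\partial \xi_2}{\partial \nu} + \eta\,\xi_3 \;=\; -\frac{c^2}{b}\Bigl[\frac{\partial \xi_1}{\partial \nu} + \eta\,\xi_2\Bigr] \quad \text{on } \Gamma_0,
\]
so the two conditions in $\D(\mathscr{A})$ are equivalent, and it suffices to verify that $\phi(t) := [\partial_\nu u(t) + \eta u_t(t)]|_{\Gamma_0}$ vanishes identically. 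At $t=0$ this is exactly the hypothesis \eqref{comp}, and rewriting $\Psi(t) \in \D(\mathbb{A})$ in the original variables gives the scalar ODE $\phi_t(t) + (c^2/b)\phi(t) = 0$ on $\Gamma_0$. Uniqueness for this trivial ODE with zero initial data forces $\phi(t) \equiv 0$, recovering both boundary conditions of $\D(\mathscr{A})$ along the trajectory.

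The main obstacle is precisely this boundary ODE argument: it is what allows one to trade a \emph{second} pointwise boundary constraint at each time for a single initial compatibility condition, and it explains why $\D(\mathbb{A})$ can legitimately be smaller in constraints than $M \D(\mathscr{A})$ while still yielding an equivalent Cauchy problem. Once this equivalence of domains is established, translating the equation from $\mathscr{A}$-form to $\mathbb{A}$-form (and back) is immediate, and the strong-solution regularity $C([0,T];\D(\cdot)) \cap C^1([0,T];\mathbb{H})$ passes through $M$ and $M^{-1}$ by boundedness.
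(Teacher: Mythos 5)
Your proposal is correct and follows essentially the same route as the paper: the forward direction is a direct boundary-trace computation under $M$, and the converse hinges on the observation that membership in $\D(\mathbb{A})$ forces the scalar ODE $b\phi_t + c^2\phi = 0$ for $\phi(t) = [\partial_\nu u(t) + \eta u_t(t)]|_{\Gamma_0}$, which together with the compatibility condition $\phi(0)=0$ yields $\phi \equiv 0$ and hence both boundary constraints of $\D(\mathscr{A})$. Your write-up merely makes explicit the algebra that the paper leaves as "simple to check."
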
 \begin{proof}
It is simple to check that if $\Phi \in C^1(0,T;\mathbb{H})\cap C(0,T;\D(\mathscr{A}))$ is a strong solution for \eqref{usist} then $\Psi = M\Phi$ belongs to $C^1(0,T;\mathbb{H})\cap C(0,T;\D(\mathbb{A}))$ and satisfy \eqref{usistz}. 

For the reverse implication, the only non-trivial step is to prove that boundary conditions match. To this end, assume that $\Psi = (u,z,z_t) \in C^1(0,T;\mathbb{H}) \cap C(0,T;\D(\mathbb{A}))$ is a strong solution for \eqref{usistz}. Let 
$$\Upsilon(t) := \left(\dfrac{\partial u(t)}{\partial \nu} + \eta u_t(t)\right)\biggr\rvert_{\Gamma_0}, \ t \geqslant 0$$ and notice that $b\Upsilon_t +c^2\Upsilon = 0$ for all $t$. This along with the compatibility condition ($\Upsilon(0) = 0$) implies that $\Upsilon \equiv 0$ hence $(u,u_t,u_{tt}) \in \D(\mathscr{A})$ for all $t$. The proof is then complete.
\end{proof}

\

For $(\xi_1,\xi_2,\xi_3)^\top \in \D(\mathbb{A})$ a basic algebraic computation yields the explicit formula for $\mathbb{A}.$ \begin{equation}
\mathbb{A}(\xi_1,\xi_2,\xi_3)^\top = \left(\xi_2-\dfrac{c^2}{b}\xi_1,\xi_3,-\gamma\left(\xi_3 - \dfrac{c^2}{b}\xi_2 + \dfrac{c^4}{b^2}\xi_1\right) -bA\xi_2-b\eta ANN^\ast A\xi_3\right).
\end{equation}

\ 

We are ready for our generation result.

\begin{theorem}\label{gener}
	The operator $\mathscr{A}$ generates a strongly continuous semigroup on $\mathbb{H}$.
\end{theorem}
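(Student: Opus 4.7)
The plan is to apply the Lumer--Phillips theorem to $\mathbb{A}$ directly. By Lemma \ref{equiv_prob} the map $M\in\mathcal{L}(\mathbb{H})$ is an isomorphism of $\mathbb{H}$ intertwining $\mathscr{A}$ and $\mathbb{A}=M\mathscr{A}M^{-1}$, so any $C_0$-generation statement for $\mathbb{A}$ transfers verbatim to $\mathscr{A}$. It therefore suffices to exhibit $C\in\mathbb{R}$ such that (i) $\mathbb{A}-CI$ is dissipative on $\mathbb{H}$ with the inner product \eqref{inH}, and (ii) $\mathrm{Range}(\lambda I-\mathbb{A})=\mathbb{H}$ for some (hence all) sufficiently large $\lambda$. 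Density of $\D(\mathbb{A})$ in $\mathbb{H}$ is automatic since $\D(\mathbb{A})$ contains all triples of compactly supported smooth functions.

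For (i), I would compute $(\mathbb{A}\Phi,\Phi)_{\mathbb{H}}$ termwise on $\Phi=(\xi_1,\xi_2,\xi_3)^{\top}\in\D(\mathbb{A})$. The top-order contributions $b(\p\xi_3,\p\xi_2)$ (from the second component) and $-b(A\xi_2,\xi_3)$ (from the third) are paired through Green's identity; using $\xi_3|_{\Gamma_1}=0$ together with the Robin-type relation $\partial\xi_2/\partial\nu=-\eta\xi_3$ on $\Gamma_0$ built into \eqref{domz}, their volume parts cancel and a boundary residue $-b\eta\|\xi_3\|^2_{\Gamma_0}$ survives. The genuinely nonlocal boundary term $-b\eta(ANN^{\ast}A\xi_3,\xi_3)$ is evaluated through the trace identity $N^{\ast}A\xi=\xi|_{\Gamma_0}$ from \eqref{neq}, producing an additional $-b\eta\|\xi_3\|^2_{\Gamma_0}$. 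Every remaining contribution---the ODE-like piece $(\p(\xi_2-(c^2/b)\xi_1),\p\xi_1)$ and the $\gamma$-couplings $\gamma(c^2/b)(\xi_2,\xi_3)-\gamma(c^4/b^2)(\xi_1,\xi_3)-\gamma\|\xi_3\|_2^2$---is absorbed by $C\|\Phi\|_{\mathbb{H}}^2$ via Cauchy--Schwarz and Young, with $C$ depending only on $b,c,\eta$ and $\|\alpha\|_{C(\overline{\Omega})}$. Thus $(\mathbb{A}\Phi,\Phi)_{\mathbb{H}}\leq C\|\Phi\|_{\mathbb{H}}^2-2b\eta\|\xi_3\|^2_{\Gamma_0}$, which establishes (i).

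For (ii), given $F=(f_1,f_2,f_3)^{\top}\in\mathbb{H}$ and $\lambda>0$, the first two components of $(\lambda I-\mathbb{A})\Phi=F$ yield algebraically
\[
\xi_1=\frac{\xi_2+f_1}{\lambda+c^2/b},\qquad \xi_3=\lambda\xi_2-f_2,
\]
reducing everything to a single elliptic equation for $\xi_2$ of the form $\mu(\lambda)\xi_2+bA\xi_2+b\eta\lambda ANN^{\ast}A\xi_2=G$, with $\mu(\lambda)=\lambda^2+O(1)$ as $\lambda\to\infty$, boundary data $\partial\xi_2/\partial\nu=-\eta\lambda\xi_2+\eta f_2$ on $\Gamma_0$ and $\xi_2=0$ on $\Gamma_1$, and $G$ a prescribed element built from $F$. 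Passing to a weak formulation on $H^1_{\Gamma_1}(\Omega)$ through Green's identity and the trace identity $(ANN^{\ast}A\xi_2,\psi)=(\xi_2,\psi)_{\Gamma_0}$, the bilinear form reads
\[
a_{\lambda}(\xi_2,\psi):=\mu(\lambda)(\xi_2,\psi)+b(\p\xi_2,\p\psi)+2b\eta\lambda(\xi_2,\psi)_{\Gamma_0}=\Lambda_{\lambda}(\psi),
\]
with $\Lambda_{\lambda}\in[H^1_{\Gamma_1}(\Omega)]'$. For $\lambda$ large this form is continuous and coercive (Poincar\'e on $H^1_{\Gamma_1}$ applies since $\Gamma_1$ has positive surface measure), so Lax--Milgram delivers a unique $\xi_2$; second-order elliptic regularity on the $C^2$ domain $\Omega$ upgrades it to $H^2_{\Gamma_1}(\Omega)$ realizing the Robin data, and the reconstructed triple $(\xi_1,\xi_2,\xi_3)$ satisfies the compatibility in \eqref{domz}, hence lies in $\D(\mathbb{A})$.

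The main obstacle I expect is the unbounded, non-closeable operator $\eta ANN^{\ast}A$ embedded inside a third-order-in-time dynamics---precisely the difficulty flagged in the introduction as distinctive of Neumann-feedback MGT relative to the Dirichlet case. The structural fact that rescues both (i) and (ii) is that $ANN^{\ast}A$ is exactly matched to the Robin-type boundary condition on $\xi_2$: the boundary integrals generated by Green's formula on $(A\xi_2,\xi_3)$ and by $(ANN^{\ast}A\,\cdot\,,\,\cdot\,)$ appear with the \emph{same} sign and combine constructively, producing genuine dissipation in (i) and strict coercivity in (ii) rather than competing cancellations. Once this sign-matching is recognized, the elliptic step in (ii) is a standard Lax--Milgram plus regularity argument, and no abstract perturbation machinery for unbounded boundary operators is required.
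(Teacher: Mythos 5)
Your overall strategy---Lumer--Phillips for $\mathbb{A}=M\mathscr{A}M^{-1}$, i.e.\ (quasi-)dissipativity plus a range condition---is the same framework as the paper's, but the execution of both halves differs. The paper first splits $\mathbb{A}=\mathbb{A}_d+P$ with $P$ bounded, so that $\mathbb{A}_d$ is \emph{genuinely} dissipative and, more importantly, the resolvent system for $\mathbb{A}_d$ decouples: $\xi_1$ is found explicitly, $\xi_3$ solves a scalar equation $K_\lambda\xi_3=\lambda A^{-1}h-bg$ with $K_\lambda=(\lambda^2+\lambda)A^{-1}+b+b\lambda\eta NN^{\ast}A$ strictly positive on $\D(A^{1/2})$, and membership in $\D(\mathbb{A})$ is read off from the abstract identity $b\lambda(\xi_2+\eta NN^{\ast}A\xi_3)=-(\lambda^2+\lambda)A^{-1}\xi_3+\lambda A^{-1}h\in\D(A)$, so no PDE elliptic regularity is ever invoked. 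You keep the $\gamma$-couplings inside the generator, accept quasi-dissipativity, and solve the range equation by Lax--Milgram plus $H^2$-regularity for a mixed Robin ($\Gamma_0$)/Dirichlet ($\Gamma_1$) problem. That is workable, but $H^2$-regularity up to the interface $\overline{\Gamma_0}\cap\overline{\Gamma_1}$ of a mixed boundary-value problem is a delicate point which the paper's abstract route deliberately sidesteps; the safe target is the first (abstract) characterization of the domain in \eqref{domA}, not literal $H^2$ membership near the interface.

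Two slips should be corrected. First, you double-count the boundary dissipation: since $\partial\xi_2/\partial\nu=-\eta\xi_3$ on $\Gamma_0$, one has $A(\xi_2+\eta NN^{\ast}A\xi_3)=-\Delta\xi_2$ exactly, so the Green's-formula boundary residue you extract from $-b(A\xi_2,\xi_3)$ and the term $-b\eta(ANN^{\ast}A\xi_3,\xi_3)=-b\eta\|\xi_3\|^2_{\Gamma_0}$ are one and the same contribution, not two; the correct total is $-b\eta\|\xi_3\|^2_{\Gamma_0}$ (and correspondingly $b\eta\lambda(\xi_2,\psi)_{\Gamma_0}$, not $2b\eta\lambda(\xi_2,\psi)_{\Gamma_0}$, in your bilinear form $a_\lambda$). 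The sign is unaffected, so dissipativity and coercivity survive, but the bookkeeping reflects a misreading of $A$ as the classical Laplacian rather than its duality extension. Second, $\D(\mathbb{A})$ is \emph{not} dense because it contains compactly supported smooth triples: $C_c^\infty(\Omega)$ is dense in $H^1_0(\Omega)$, which is a proper closed subspace of $H^1_{\Gamma_1}(\Omega)$ whenever $\Gamma_0\neq\emptyset$. Density still comes for free, but for a different reason: a dissipative operator on a Hilbert space with $\mathrm{Range}(\lambda I-\mathbb{A})=\mathbb{H}$ is automatically densely defined.
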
 \begin{proof}
Equivalently, we show that $\mathbb{A}$ generates a strongly continuous semigroup on $\mathbb{H}$. If $\{S(t)\}_{t \geqslant 0}$ is the said semigroup then $\{T(t)\}_{t\geqslant 0}$ ($T(t) := M^{-1}S(t)M, t\geqslant 0$) will be the semigroup generated by $\mathscr{A}.$

Write $\mathbb{A} = \mathbb{A}_d + P$ where $$P(\xi_1,\xi_2,\xi_3) = \left(\xi_2,0,\dfrac{\gamma c^2}{b}\left(\xi_2- \dfrac{c^2}{b}\xi_1\right)+(1-\gamma)\xi_3\right), \ \ (\xi_1,\xi_2,\xi_3)^\top \in \mathbb{H}$$ is bounded in $\mathbb{H}$ and \begin{equation}\mathbb{A}_d(\xi_1,\xi_2,\xi_3) = \left(- \dfrac{c^2}{b}\xi_1,\xi_3,-\xi_3 - bA\xi_2-b\eta ANN^\ast A\xi_3\right), \ (\xi_1,\xi_2,\xi_3)^\top \in \D(\mathbb{A}_d),\end{equation} where $\D(\mathbb{A}_d) := \D(\mathbb{A}).$ It then suffices to prove generation of $\mathbb{A}_d$ on $\mathbb{H}$.

We start by showing dissipativity: for $(\xi_1,\xi_2,\xi_3)^\top \in \D(\mathbb{A})$ we have \begin{align*}
\left(\mathbb{A}_d(\xi_1,\xi_2,\xi_3),(\xi_1,\xi_2,\xi_3)\right)_{\mathbb{H}} &= - \dfrac{c^2}{b}\|\p \xi_1\|_2^2 + b\left(\p \xi_3, \p\xi_2\right) -\|\xi_3\|_2^2 - b(\p\xi_2,\p\xi_3) - b\eta\|\xi_3\|_{\Gamma_0}^2 \\ &= - \dfrac{c^2}{b}\|\p \xi_1\|_2^2 -\|\xi_3\|_2^2 - b\eta\|\xi_3\|_{\Gamma_0}^2\leqslant 0,
\end{align*} hence, $\mathbb{A}_d$ is dissipative in $\mathbb{H}$.

For maximality in $\mathbb{H}$, given any $L = (f,g,h) \in \mathbb{H}$ we need to show that there exists $\Psi = (\xi_1,\xi_2,\xi_3)^\top \in \D(\mathbb{A})$ such that $(\lambda - \mathbb{A}_d)\Psi = L$, for some $\lambda >0.$ This leads to the system of equations: \begin{equation}
\begin{cases}
\lambda \xi_1 + \dfrac{c^2}{b}\xi_1 = f, \\ \lambda \xi_2 - \xi_3 = g, \\
\lambda \xi_3 +\xi_3 + b A[\xi_2 + \eta NN^\ast A\xi_3]= h,
\end{cases}
\end{equation} which implies $\xi_1 = \left(\lambda + \dfrac{c^2}{b}\right)^{-1}f \in \D(\p).$ Moreover, since $A^{-1} \in \mathcal{L}({L^2(\Omega)})$ the third equation above yields $$\left[(\lambda^2 + \lambda)A^{-1}+b + b\lambda\eta NN^\ast A\right]\xi_3 = \lambda A^{-1}h - bg$$ and then by the strictly positivity (in $\D(\p)$) of the operator $K_\lambda$ defined as $$K_{\lambda} := (\lambda^2 + \lambda)A^{-1}+b + b\lambda\eta NN^\ast A$$  (in fact recall $(NN^\ast A\xi,\xi)_{\D(\p)} = \|N^\ast A\xi\|_{\Gamma_0}^2$ for all $\xi \in 
\D(\p)$ therefore $K_\lambda^{-1} \in \mathcal{L}(\D(\p))$) we can write  $\xi_3 = K_\lambda^{-1}(\lambda A^{-1} h - bg) \in \D(\p).$ Finally, $\xi_2 = \lambda^{-1}(\xi_3+g) = \lambda^{-1}(K_\lambda^{-1}(\lambda A^{-1} h - bg)+g) \in \D(\p).$

The final step for concluding membership of $(\xi_1,\xi_2,\xi_3)$ in $\D(\mathbb{A})$ follows from \begin{align*}
b\lambda (\xi_2 + \eta NN^\ast A \xi_3) &= - (\lambda^2 + \lambda)A^{-1}\xi_3 + \lambda A^{-1} h \in \D(A).
\end{align*} 

Generation is then achieved.
\end{proof}
Applying standard semigroup arguments \cite{pazy_semigroups_1992}  to the result of Theorem \ref{gener}, we obtain the following corollary, which in turn completed the proof of Theorem \ref{t1}. 


\begin{cor}[Wellposedness and Regularity] \label{wpG} Assume $f \in L^1(\n{R}_+,L^2(\Omega))$ and that condition \eqref{comp} is at force. Denote by $\{T(t)\}_{t\geqslant 0}$ the semigroup given by Theorem \ref{gener}.
	
	\emph{(i)} If $\Phi_0 \in \mathbb{H}$, then the function $\Phi \in C([0,T]; \mathbb{H})$ defined as
	$$\Phi(t) \equiv T(t)\Phi_0 + \int_0^t T(t-\sigma)F(\sigma)d\sigma, \ \ t \in [0, T]$$ is the unique mild solution for \eqref{usist} in $\mathbb{H}$.
	
	\emph{(ii)} If $\Phi_0 \in \D(\mathscr{A})$ and, in addition, $f \in C^1(\n{R}_+, L^2(\Omega)$ then the function $\Phi \in C^1([0,T]; \mathbb{H})\cap C((0,T);\D(\mathscr{A}))$ defined as
	$$\Phi(t) \equiv T(t)\Phi_0 + \int_0^t T(t-\sigma)F(\sigma)d\sigma \ \ t \in [0, T]$$ is the unique classical solution for \eqref{usist} in $\mathbb{H}$.
\end{cor}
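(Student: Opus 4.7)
The plan is to reduce the corollary to the classical semigroup-theoretic templates in Pazy \cite{pazy_semigroups_1992}, since Theorem \ref{gener} already supplies the crucial ingredient: a strongly continuous semigroup $\{T(t)\}_{t\geqslant 0}$ on $\mathbb{H}$ generated by $\mathscr{A}$. The entire statement will then follow by verifying that the forcing $F=(0,0,f)^{\top}$ has the regularity needed to apply the standard mild- and classical-solution theorems to the abstract Cauchy problem \eqref{usist}.

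For part (i), I first observe that the embedding $L^2(\Omega)\hookrightarrow \mathbb{H}$ that sends $f$ to the third component of $F$ is continuous by inspection of the inner product \eqref{inH}; hence $f\in L^1(\mathbb{R}_+,L^2(\Omega))$ implies $F\in L^1((0,T),\mathbb{H})$ for every $T>0$. The Duhamel integral $\int_0^t T(t-\sigma)F(\sigma)\,d\sigma$ is thus a well-defined Bochner integral, continuous in $t$ by the strong continuity of $T(\cdot)$ together with dominated convergence, and added to $T(\cdot)\Phi_0\in C([0,T];\mathbb{H})$ yields the claimed mild solution. Uniqueness is automatic: the difference of two mild solutions with the same data and forcing has zero initial datum and zero forcing, and hence vanishes identically by the semigroup property.

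For part (ii), with $\Phi_0\in \D(\mathscr{A})$ the homogeneous part $T(\cdot)\Phi_0$ lies in $C^1([0,T];\mathbb{H})\cap C([0,T];\D(\mathscr{A}))$ by the standard invariance of the domain under the semigroup and the differentiability of $t\mapsto T(t)\Phi_0$ on $\D(\mathscr{A})$. The additional assumption $f\in C^1(\mathbb{R}_+,L^2(\Omega))$ promotes $F$ to $C^1(\mathbb{R}_+,\mathbb{H})$, and the classical inhomogeneous-evolution theorem (Pazy, Thm.~4.2.4) then delivers that $t\mapsto \int_0^t T(t-\sigma)F(\sigma)\,d\sigma$ is $C^1$ into $\mathbb{H}$, takes values in $\D(\mathscr{A})$, and solves the equation pointwise. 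Summing the two contributions produces the classical solution, uniqueness again being a standard consequence.

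I foresee no essential obstacle: this corollary is purely a packaging of Theorem \ref{gener}, the only verifications being the continuity of the embedding $L^2(\Omega)\hookrightarrow \mathbb{H}$ through the last coordinate and the direct invocation of Pazy's results. Its real purpose is to record the regularity of the solution map in the precise form required by the multiplier estimates used later in the stability proof of Theorem \ref{t2}.
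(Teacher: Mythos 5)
Your proposal is correct and follows exactly the route the paper takes: the paper offers no written proof of this corollary beyond the remark that it follows from ``standard semigroup arguments'' in \cite{pazy_semigroups_1992} applied to Theorem \ref{gener}, and your argument is precisely a careful unpacking of that citation (checking $F=(0,0,f)^{\top}\in L^1((0,T);\mathbb{H})$, respectively $C^1$, and invoking the mild/classical inhomogeneous-evolution theorems). No discrepancies to report.
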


\begin{rmk}
	We notice here that condition \eqref{comp} is not essential for well-posedness of weak solutions. However,  it is critical for  the  regularity of solutions which allows to interpret mild (semigroup) solution in a strong form of equation $\eqref{eqnl1}.$
\end{rmk}

\subsection{Stabilization in $\mathbb{H}$: proof of Theorem \ref{t2}}

In order to allow future  tracking stability rates with respect to the parameter $\tau$  we now resume considering $\tau > 0$. Moreover, we will use the notation $a \lesssim b$ to say that $a \leqslant Cb$ where $C$ is a constant possibly depending on the physical parameters of the model ($\tau,c,b > 0$) but independent of space, time and $\gamma \in C(\overline{\Omega}).$

We will introduce several energy functionals which will be used to describe  long time behavior of mild solutions to \eqref{eqnl1}. 
 For a classical solution $(u,u_t,u_{tt})$ of \eqref{usist} in $\mathbb{H}$, we define the corresponding energy by $E(t) = E_0(t) + E_1(t)$ where $E_i:[0,T] \to \mathbb{R}_+$ ($i = 0,1$) are defined by \begin{align}\label{E1} E_1(t) & := \dfrac{b}{2} \left\|\p{ \left(u_t + \dfrac{\tau c^2}{b}u\right)}\right\|_2^2 + \dfrac{\tau}{2}\left\|u_{tt} + \dfrac{c^2}{b}u_t\right\|_2^2 + \dfrac{c^2}{2b}\|\gamma^{1/2}u_t\|_2^2 \nonumber \\ &= \dfrac{b}{2}\|\p{z}\|_2^2 + \dfrac{\tau}{2}\|z_t\|_2^2 + \dfrac{c^2}{2b}\|\gamma^{1/2}u_t\|_2^2\end{align}(where we have ommited the variable $x$ in $\gamma(x)$) and \begin{align}\label{E0} E_0(t)& := \dfrac{1}{2}\|\alpha^{1/2}u_t\|_2^2 + \dfrac{c^2}{2}\|\p{u}\|_2^2\end{align} where we have ommited $x$ in $\alpha(x)$.

The next lemma guarantees that stability of solutions in $\mathbb{H}$ is equivalent to uniform exponential decay of the function $t \mapsto E(t).$

\begin{lemma}\label{equinorm}
	Let $\Phi = (u,u_t,u_{tt})$ be a weak solution for \eqref{usist} in $\mathbb{H}$ and assume condition \eqref{comp} holds. Then the following statements are equivalent: \begin{itemize}
		\item[\bf a)] $t \mapsto \|\Phi(t)\|_{\mathbb{H}}^2$ decays exponentially.
		
		\item[\bf b)] $t \mapsto \|M\Phi(t)\|_{\mathbb{H}}^2 = \|(u,z,z_t)\|_{\mathbb{H}}^2$ decays exponentially.
		
		\item[\bf c)] $t \mapsto E(t)$ decays exponentially.
	\end{itemize}
\end{lemma}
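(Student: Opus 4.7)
The plan is to establish the three equivalences by pure norm-comparison arguments, i.e.\ to exhibit positive constants $c_1,c_2$ such that $c_1\|\Phi\|_{\mathbb{H}}^2 \le \|M\Phi\|_{\mathbb{H}}^2 \le c_2\|\Phi\|_{\mathbb{H}}^2$ and $c_1 E(t) \le \|M\Phi(t)\|_{\mathbb{H}}^2 \le c_2 E(t)$. Once such two-sided bounds are in force, exponential decay of any one of the three positive functions clearly transfers to the other two, and no finer information on the dynamics is needed.

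For the equivalence (a)$\Leftrightarrow$(b), I would simply invoke Lemma \ref{equiv_prob}: both $M$ and $M^{-1}$ belong to $\mathcal{L}(\mathbb{H})$, so $\|M\Phi\|_{\mathbb{H}} \asymp \|\Phi\|_{\mathbb{H}}$ pointwise in $t$, and the equivalence is immediate.

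For (b)$\Leftrightarrow$(c), set $z = u_t + \tfrac{\tau c^2}{b}u$ so that $M\Phi = (u,z,z_t)^\top$ and
\begin{equation*}
\|M\Phi\|_{\mathbb{H}}^2 \,=\, \|\p u\|_2^2 + b\|\p z\|_2^2 + \|z_t\|_2^2.
\end{equation*}
The direction $\|M\Phi(t)\|_{\mathbb{H}}^2 \lesssim E(t)$ is essentially free: the three terms $\|\p u\|_2^2$, $\|\p z\|_2^2$ and $\|z_t\|_2^2$ appear in $E_0+E_1$ with strictly positive coefficients (depending only on $\tau,b,c$), and the remaining contributions $\tfrac{1}{2}\|\alpha^{1/2}u_t\|_2^2 + \tfrac{c^2}{2b}\|\gamma^{1/2}u_t\|_2^2$ are non-negative. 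For the reverse inequality $E(t) \lesssim \|M\Phi(t)\|_{\mathbb{H}}^2$, the only non-trivial step is to control the $u_t$-terms: writing $u_t = z - \tfrac{\tau c^2}{b}u$ and using Poincar\'e on $H_{\Gamma_1}^1(\Omega)$ (valid since $\Gamma_1 \ne \emptyset$), one obtains
\begin{equation*}
\|u_t\|_2^2 \,\lesssim\, \|z\|_2^2 + \|u\|_2^2 \,\lesssim\, \|\p z\|_2^2 + \|\p u\|_2^2 \,\lesssim\, \|M\Phi\|_{\mathbb{H}}^2,
\end{equation*}
which together with $\|\alpha\|_{L^\infty(\Omega)}, \|\gamma\|_{L^\infty(\Omega)} < \infty$ (continuity on $\overline{\Omega}$) absorbs both weighted terms. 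The $\|\p u\|_2^2$-piece in $E_0$ is trivially bounded by the first summand of $\|M\Phi\|_{\mathbb{H}}^2$.

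The mildest subtlety, which I would mention but not dwell on, is that strictly speaking $E(t)$ was defined for classical solutions while (a)--(c) are formulated for weak solutions; however, since a weak solution satisfies $\Phi(t)\in\mathbb{H}$ pointwise in $t$ (and consequently $z_t, \p z, \p u \in L^2(\Omega)$ make sense by the isomorphism $M$), every quantity in $E(t)$ is well-defined, and the inequalities above extend by density from $\D(\mathscr{A})$-initial data. No real obstacle arises; the proof is essentially a book-keeping exercise once one notices that the extra $u_t$-terms in $E$ can be absorbed via Poincar\'e.
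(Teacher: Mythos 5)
Your argument is correct and is essentially the paper's own proof: the paper packages your two-sided comparison into the single statement that the functional $\Sigma(\xi_1,\xi_2,\xi_3)=\tfrac{c^2}{2}\|\p{\xi_1}\|_2^2+\tfrac12\|\alpha^{1/2}(\xi_2-\tfrac{c^2}{b}\xi_1)\|_2^2+\tfrac{c^2}{2b}\|\gamma^{1/2}(\xi_2-\tfrac{c^2}{b}\xi_1)\|_2^2+\tfrac{b}{2}\|\p{\xi_2}\|_2^2+\tfrac{\tau}{2}\|\xi_3\|_2^2$ satisfies $\Sigma(\cdot)\sim\|\cdot\|_{\mathbb{H}}^2$ and $E(t)=\Sigma(M\Phi(t))$, and your Poincar\'e/boundedness-of-$\alpha,\gamma$ step is exactly the (unwritten) verification of that equivalence. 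Nothing further is needed.
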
 \begin{proof}
Observe that $\Sigma: \mathbb{H} \to \mathbb{R}_+$ defined as \begin{equation*}
\Sigma((\xi_1,\xi_2,\xi_3)^\top) = \dfrac{c^2}{2}\|\p{\xi_1}\|_2^2+\dfrac{1}{2}\left\|\alpha^{1/2}\left(\xi_2 -\dfrac{c^2}{b}\xi_1\right)\right\|_2^2 + \dfrac{c^2}{2b}\left\|\gamma^{1/2}\left(\xi_2 - \dfrac{c^2}{b}\xi_1\right)\right\|_2^2 +  \dfrac{b}{2} \|\p{\xi_2}\|_2^2 + \dfrac{\tau}{2}\|\xi_3\|_2^2
\end{equation*} is such that $\|\cdot\|_{\mathbb{H}}^2 \sim \Sigma(\cdot).$ The proof follows by noticing that $E(t) = \Sigma (M\Phi(t)).$
\end{proof}

\begin{rmk}The equivalence in Lemma \ref{equinorm} is uniform with respect to $\gamma \in C(\overline{\Omega}).$\end{rmk}

The next proposition provides the set of main identities for the linear stabilization in $\mathbb{H}.$

\begin{prop}\label{id}
	If $(u,z,z_t)$ is a classical solution of \eqref{usistz} then for all $0 \leqslant s < t \leqslant T$ the following identities hold \begin{equation}\label{e1id}
	E_1(t)  + b\eta\int_s^t\|z_t\|_{\Gamma_0}^2d\sigma + \int_s^t\|\gamma^{1/2}u_{tt}\|_2^2d\sigma = E_1(s)+ \dfrac{c^2}{b}\int_s^t( f,\gamma u_{tt})d\sigma
	\end{equation} \begin{equation}
	\label{zmul} \int_s^t\left[b\|\p{z}\|_2^2 -\|z_t\|_2^2\right]d\sigma = - \int_s^t(\gamma u_{tt},z)d\sigma - \left[(z_t,z)+\dfrac{b\eta}{2}\|z\|_{\Gamma_0}^2\right]\biggr\rvert_s^t + \int_s^t(f,z)d\sigma
	\end{equation}
	\begin{align}\int_s^t \left[ \dfrac{n}{2} \|z_t\|_2^2 - \dfrac{b(n-2)}{2}\|\p{ z}\|_2^2 \right] d\sigma &= (b+1)\int_s^t\int_{\Gamma_0}\left|z_t\right|^2h\nu d\Gamma_0d\sigma - \int_s^t(\gamma u_{tt},h\nabla{z})d\sigma \nonumber \\ &- (z_t,h\nabla{z})\biggr\rvert_s^t + \int_s^t(f,h\nabla{z})d\sigma \label{hzmult}\end{align}  where $h$ is the vector field defined as $h(x) = x-x_0$, see Assumption \ref{asgeo}.
\end{prop}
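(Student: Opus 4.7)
The plan is to derive all three identities as consequences of the multiplier method applied to the system in the $z$-variable. Under the change of variables of Lemma~\ref{equiv_prob}, the system \eqref{usistz} is equivalent to the single scalar equation
\begin{equation*}
\tau z_{tt} + \gamma\, u_{tt} - b\Delta z = f \quad\text{in } (0,T)\times\Omega,
\end{equation*}
coupled with the boundary conditions $\partial_\nu z + \eta z_t = 0$ on $\Gamma_0$ and $z = 0$ on $\Gamma_1$ (the latter inherited from the vanishing of $u$ and $u_t$ on $\Gamma_1$). Each of \eqref{e1id}, \eqref{zmul}, \eqref{hzmult} is then obtained by taking the $L^2(\Omega)$-inner product of this equation with an appropriate multiplier and integrating in time over $[s,t]$.

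For \eqref{e1id} I would use the natural energy multiplier $z_t$. The inertial term produces $\tau(z_{tt},z_t) = \frac{\tau}{2}\frac{d}{dt}\|z_t\|_2^2$. Since $z_t = u_{tt} + \frac{c^2}{b}u_t$ and $\gamma=\gamma(x)$ is time-independent, the damping term splits as
\begin{equation*}
(\gamma u_{tt},z_t) \;=\; \|\gamma^{1/2}u_{tt}\|_2^2 \;+\; \frac{c^2}{2b}\frac{d}{dt}\|\gamma^{1/2}u_t\|_2^2.
\end{equation*}
Finally, Green's formula together with $\partial_\nu z = -\eta z_t$ on $\Gamma_0$ and $z_t=0$ on $\Gamma_1$ yields $-b(\Delta z, z_t) = \frac{b}{2}\frac{d}{dt}\|\p z\|_2^2 + b\eta\|z_t\|_{\Gamma_0}^2$. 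Assembling the three contributions and integrating produces \eqref{e1id}.

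For \eqref{zmul} I would multiply instead by $z$: here $\tau(z_{tt},z) = \tau\frac{d}{dt}(z_t,z) - \tau\|z_t\|_2^2$, while Green's formula combined with the Robin condition (and the pointwise identity $-\eta z_t z = -\tfrac{\eta}{2}\frac{d}{dt}z^2$ on $\Gamma_0$) yields $-b(\Delta z, z) = b\|\p z\|_2^2 + \frac{b\eta}{2}\frac{d}{dt}\|z\|_{\Gamma_0}^2$; integrating delivers \eqref{zmul}. For \eqref{hzmult} I would apply the geometric (Rellich-type) multiplier $h\cdot\nabla z$ with $h(x)=x-x_0$ of Assumption~\ref{asgeo}. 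Using $\nabla\cdot h = n$, the inertial term gives $(z_{tt},h\nabla z) = \frac{d}{dt}(z_t,h\nabla z) + \frac{n}{2}\|z_t\|_2^2 - \frac{1}{2}\int_{\Gamma_0} z_t^2\,h\cdot\nu\,d\Gamma_0$ (since $z_t$ vanishes on $\Gamma_1$), while the standard Rellich identity for $-(\Delta z, h\nabla z)$ supplies the $-\frac{n-2}{2}\|\p z\|_2^2$ contribution together with the boundary integrals $-\int_\Gamma \partial_\nu z\,(h\cdot\nabla z)\,d\Gamma + \frac{1}{2}\int_\Gamma |\nabla z|^2\,h\cdot\nu\,d\Gamma$.

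The main obstacle lies in the bookkeeping of these last boundary integrals. On $\Gamma_1$ the Dirichlet condition $z=0$ forces $\nabla z = (\partial_\nu z)\nu$, so the two boundary pieces combine into $-\frac{b}{2}(\partial_\nu z)^2\,h\cdot\nu$, which by Assumption~\ref{asgeo} has the favourable sign and can be transferred (or dropped) as needed. On $\Gamma_0$ I would substitute $\partial_\nu z = -\eta z_t$ and decompose $\nabla z = \nabla_T z + (\partial_\nu z)\nu$ into tangential and normal parts; since tangential derivatives of $z$ on $\Gamma_0$ cannot be controlled by the trace of $z_t$, they must cancel in the collected expression, leaving only the single composite trace term with coefficient $b+1$ appearing in \eqref{hzmult}. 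Careful tracking of signs, of the orientation of $\nu$ in the geometric hypothesis, and of the cancellations among tangential traces is the delicate step.
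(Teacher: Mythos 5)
Your overall strategy is the same as the paper's: pass to the $z$-equation $\tau z_{tt}+\gamma u_{tt}-b\Delta z=f$ with the Robin/Dirichlet boundary conditions and apply in turn the multipliers $z_t$, $z$ and $h\cdot\nabla z$. For \eqref{e1id} and \eqref{zmul} your computations are correct and reproduce the paper's (the paper derives \eqref{e1id} by differentiating a bilinear form whose quadratic form is $2E_1$, which is the same calculation as multiplying by $z_t$). One remark: with the multiplier $z_t$ the source contributes $\int_s^t(f,z_t)\,d\sigma$, not the term $\tfrac{c^2}{b}\int_s^t(f,\gamma u_{tt})\,d\sigma$ displayed in \eqref{e1id}; your version is what the computation actually yields (the discrepancy is immaterial since $f=0$ in the stabilization argument).

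The genuine gap is in \eqref{hzmult}. You correctly reduce the problem to the Rellich boundary terms, but your assertion that the tangential traces on $\Gamma_0$ ``must cancel'' is false. Substituting $\partial_\nu z=-\eta z_t$ and $\nabla z=\nabla_T z+(\partial_\nu z)\nu$ on $\Gamma_0$, and using $z=0$ on $\Gamma_1$, the boundary contribution that actually survives on the right-hand side is
\begin{equation*}
\Bigl(\tfrac12+\tfrac{b\eta^2}{2}\Bigr)\int_{\Gamma_0}z_t^2\,h\cdot\nu\,d\Gamma
-\tfrac{b}{2}\int_{\Gamma_0}|\nabla_T z|^2\,h\cdot\nu\,d\Gamma
-b\eta\int_{\Gamma_0}z_t\,(h\cdot\nabla_T z)\,d\Gamma
+\tfrac{b}{2}\int_{\Gamma_1}(\partial_\nu z)^2\,h\cdot\nu\,d\Gamma,
\end{equation*}
so neither the tangential-gradient term nor the cross term disappears, and no bookkeeping produces the single term $(b+1)\int_{\Gamma_0}|z_t|^2h\cdot\nu$ (note also that $\tfrac12+\tfrac{b\eta^2}{2}\neq b+1$ in general). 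The statement can only be salvaged as an \emph{inequality}: the $\Gamma_1$ term and $-\tfrac{b}{2}\int_{\Gamma_0}|\nabla_T z|^2h\cdot\nu$ are nonpositive by Assumption \ref{asgeo} and may be dropped, and the cross term must be absorbed into the tangential term, which requires $h\cdot\nu$ bounded away from zero on $\Gamma_0$ (or a microlocal trace estimate) — and an inequality is indeed all that Theorem \ref{tdatko} uses. To be fair, the paper's own proof makes the same unjustified reduction: the passage from \eqref{a15} to \eqref{a16} replaces $(h\cdot\nabla z)\,\partial_\nu z$ by $|\nabla z|^2\,h\cdot\nu$ on $\Gamma_0$, which presumes $\nabla z$ is purely normal there, and the $\Gamma_1$ flux is silently discarded. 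So your proposal follows the paper's route, gap included, but as written it does not establish \eqref{hzmult} as stated.
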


\begin{proof} \begin{itemize} \item[\bf 1.] {\bf Proof of \eqref{e1id}:}
	Let, on $\mathbb{H}$, the bilinear form $\langle \cdot, \cdot \rangle$ be given by \begin{align}
	\langle (\xi_1,\xi_2,\xi_3)^\top, (\varphi_1,\varphi_2,\varphi_3) \rangle &= b\left(\p{\left(\xi_2 + \dfrac{c^2}{b}\xi_1\right)},\p{\left(\varphi_2 + \dfrac{c^2}{b}\varphi_1\right)}\right) \nonumber \\ &+ \left(\xi_3 + \dfrac{c^2}{b}\xi_2,\varphi_3 + \dfrac{c^2}{b}\varphi_2\right) + \dfrac{ c^2}{b}(\xi_2,\gamma\varphi_2),
	\end{align} which is clealy continuous. Moreover, recalling that $\Phi(t) = (u(t),u_t(t),u_{tt}(t))$ it follows that $2E_1(t) = \langle\Phi(t),\Phi(t)\rangle$ (see \eqref{E1}) therefore -- mostly ommiting the parameters $t \in [0,T]$ and $x$ in $\gamma(x)$ we obtain \begin{align}
	\dfrac{dE_1(t)}{dt} &= \left\langle \dfrac{d\Phi(t)}{dt},\Phi(t)\right\rangle = \langle \mathscr{A}\Phi(t)+F,\Phi(t)\rangle \nonumber \\ &= \langle (u_{t},u_{tt},-\alpha u_{tt} - c^2 A(u -\eta NN^\ast Au_t) - b A(u_t -\eta NN^\ast Au_{tt}))^\top,(u,u_t,u_{tt})\rangle + \dfrac{c^2}{b}(f,\gamma u_{tt}) \nonumber \\ & {\color{blue}\overset{\mathrm{(*)}}{=}} \ b\left(\p{z_t},\p{z}\right) + \left(-\gamma u_{tt} - c^2 A(u + NN^\ast Au_t) - b A(u_t + NN^\ast Au_{tt}),z_t\right) \nonumber \\ &+ \dfrac{c^2}{b}(\gamma u_{tt},u_t) + \dfrac{c^2}{b}( f, \gamma u_{tt}) = - \|\gamma^{1/2}u_{tt}\|_2^2 -b\eta\|z_t\|_{\Gamma_0}^2 + \dfrac{c^2}{b}( f,\gamma u_{tt}),
	\end{align} where, in $(*)$ above we have used the definition of $\langle \cdot, \cdot \rangle$ along with $bz(t) = u_t(t) + c^2 u(t)$ from what it follows that $-\alpha u_{tt}(t) + \frac{c^2}{b}u_{tt}(t) = -\gamma u_{tt}(t)$ and we directly computed \begin{align}
	\label{aux0} \left(bA(z + \eta NN^\ast Az_t),z_t\right) &= b(\p{(z+\eta NN^\ast Az_t)},\p{z_t}) \nonumber \\ &= b(\p{z},\p{z_t}) + b\eta\|N^\ast Az_t\|_{L^2(\Gamma)}^2 \nonumber \\ &= b(\p{z},\p{z_t}) + b\eta \|z_t\|_{L^2(\Gamma)}^2
	\end{align} by using self-adjointness of $A$ and the characterization \eqref{neq}. The identity \eqref{e1id} then follows by integration on $(s,t)$ for $0 \leqslant s < t \leqslant T.$ 

\item[\bf 2.] {\bf Proof of \eqref{zmul}:} Observe that taking the $L^2$--inner product of $$z_{tt} + bA(z + \eta ANN^\ast Az_t) = -\gamma u_{tt} + f$$ with $z$ we have, for the left hand side: \begin{align*}
(z_{tt} + bA(z + \eta ANN^\ast Az_t),z) &= \dfrac{d}{dt}(z_t,z) - \|z_t\|_2^2 + b\|\p{z}\|_2^2 + \dfrac{b\eta}{2}\dfrac{d}{dt}\|z_t\|_{\Gamma_0}^2,
\end{align*} while for the right hand side we have \begin{align*}
(-\gamma u_{tt} + f, z) = (-\gamma u_{tt}, z) + (f,z).
\end{align*}Then, putting right and left hand sides together and integrating on $(s,t)$ for $0 \leqslant s < t \leqslant T$ yelds 
\eqref{zmul}.

\item[\bf 3.] {\bf Proof of \eqref{hzmult}:} From divergence theorem, recall that for a vector field $h$ and a function $\varphi: \n{R}^n \to \n{R}$ we have that \begin{equation} \label{div} \io h\nabla \varphi d\Omega = \ig \varphi h\cdot \nu d\Gamma - \io \varphi\dv{h} d\Omega.
	\end{equation}

Considering $h(x) = x - x_0$, $x \in \overline{\Omega}$ we go back to the original (non-abstract) $z$--equation \begin{equation}\label{eqor}z_{tt} - b\Delta z = -\gamma u_{tt} + f\end{equation} -- with boundary conditions $\dfrac{\partial z}{\partial \nu} + \eta z_t = 0$ on $\Gamma_0$ and $z = 0$ on $\Gamma_1$ -- and multiply it by $h\nabla z$. We next analyze each of the involved terms in the resulting expression, ommiting the variable $t$ in most steps. For the first term we obtain $$(z_{tt},h\nabla z) = \dfrac{d}{dt}(z_t,h\nabla z) - (z_t,h\nabla z_t)$$ which, by chain rule $\nabla (\theta^2) = 2\theta \nabla \theta$ can be rewritten as \begin{equation}
\label{a11} (z_{tt},h\nabla z) = \dfrac{d}{dt}(z_t,h\nabla z)- \dfrac{1}{2} \io h\nabla(z_t^2(t))d\Omega\end{equation} and then we can apply the Divergence Theorem (with $\varphi = z_t^2$). For this we notice that since $h = x-x_0 = (x_1 - {x_0}_1, \cdots, x_n - {x_0}_n)$, we have$$\dv{h} = \sum\limits_{k=0}^n \dfrac{\partial (x_i-{x_0}_i)}{\partial x_i} = n.$$
Hence recalling that $z_t(t) = 0$ on $\Gamma_1$ for all $t$, we can further rewrite \eqref{a11} as \begin{align}
(z_{tt},h\nabla z) &= \dfrac{d}{dt} (z_t,h\nabla z) - \dfrac{1}{2} \int_{\Gamma_0} z_t^2h\nu d\Gamma_0 + \dfrac{n}{2} \|z_t\|_2^2. \label{a12}\end{align}

Moving to the next term, we first apply Green's first Theorem to get \begin{align}
(\Delta z,h\nabla z) &= - (\nabla z, \nabla(h\nabla z)) + \left(h\nabla z,\dfrac{\partial z}{\partial \nu}\right)_{\Gamma_0} \label{a13}
\end{align} and then recalling the product rule for gradients along with the fact that the Jacobian Matrix of $h$ is the identity we have \begin{align*}
\nabla z\nabla (h\nabla z) &= \nabla z(h\nabla(\nabla z) + Jh\nabla z) \nonumber \\&= h\nabla z\nabla(\nabla z) + Jh|\nabla z|^2 = \dfrac{h}{2} \nabla(|\nabla z|^2) + |\nabla z|^2
\end{align*} which then allows us to rewrite \eqref{a13} as \begin{align}
(\Delta z,h\nabla z) &= - \dfrac{1}{2}\io h\nabla(|\nabla z|^2)d\Omega - \|\nabla z\|_2^2 + \left(h\nabla z,\dfrac{\partial z}{\partial \nu}\right)_{\Gamma_0}, \label{a14}
\end{align} and then again application of divergence theorem (with $\varphi = |\nabla z|^2$) gives \begin{align}
(\Delta z,h\nabla z) &= - \dfrac{1}{2}\int_{\Gamma_0} |\nabla z|^2h\nu d\Gamma_0 + \left(\dfrac{n}{2}-1\right) \|\nabla z\|_2^2 + \left(h\nabla z,\dfrac{\partial z}{\partial \nu}\right)_{\Gamma_0}, \label{a15}
\end{align} and finally recalling the definition of normal derivative $\left(\dfrac{\partial z}{\partial \nu} = \nabla z \cdot \nu\right)$ we get \begin{align}
(\Delta z,h\nabla z) &= \dfrac{1}{2}\int_{\Gamma_0} |\nabla z|^2h\nu d\Gamma_0 + \left(\dfrac{n}{2}-1\right)\|\nabla z\|_2^2, \label{a16}. \end{align} 

The identity \eqref{hzmult} then follows by putting together equations \eqref{a16} (multiplied by $-b$) and \eqref{a12} along with the right hand side of \eqref{eqor} multiplied by $h\nabla z$.
\end{itemize}
\end{proof}

\begin{rmk}
	Take $f = 0.$ Observe that if $\gamma \equiv 0$ and one has no other source of dissipation -- zero boundary data and no interior damping -- then it follows from identity \eqref{e1id} that the $E_1(t) \equiv E_1(0),$ for all $t \geqslant 0.$ With the presence of the boundary dissipation, however, $E_1$ is decreasing even for $\gamma \equiv 0.$
\end{rmk}

\begin{theorem}\label{tdatko}
	Let $\Psi = (u,z,z_t)$ be a classical solution of \eqref{usistz} in $\mathbb{H}$. Then for all $0 \leqslant s < t \leqslant T$, if $f = 0$, the following estimate holds \begin{align}\label{stab1}
	E(t) + \int_s^t E(\sigma)d\sigma \lesssim E(s).
	\end{align}
\end{theorem}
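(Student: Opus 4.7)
The plan is to prove this Datko-style integral estimate by combining the three multiplier identities of Proposition \ref{id}: identity \eqref{e1id} provides the dissipation budget, while \eqref{zmul} and \eqref{hzmult} together produce an observability-type estimate for the $z$--component. With $f = 0$, \eqref{e1id} immediately yields $E_1(t)\leq E_1(s)$ together with
\[
\int_s^t\bigl(\|\gamma^{1/2}u_{tt}\|_2^2 + b\eta\|z_t\|_{\Gamma_0}^2\bigr)\,d\sigma \leq E_1(s),
\]
so both dissipation quantities arising from the Neumann feedback and from the $\gamma$--damping are controlled by the initial energy and become available as absorbable terms.

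The next step, which is the heart of the argument, is to combine \eqref{zmul} and \eqref{hzmult} with positive weights chosen so that $\int_s^t\|z_t\|_2^2\,d\sigma$ and $\int_s^t\|A^{1/2}z\|_2^2\,d\sigma$ both appear on the left-hand side with strictly positive coefficients; in dimension $n=3$ the plain sum works, producing $\tfrac12\|z_t\|_2^2 + \tfrac{b}{2}\|A^{1/2}z\|_2^2$ (the case $n=2$ requires a mild reweighting). On the right, the boundary integral is $\int_s^t\|z_t\|_{\Gamma_0}^2$, already bounded above; the pointwise-in-time terms $(z_t,z)|_s^t$, $(z_t,h\nabla z)|_s^t$, $\|z\|_{\Gamma_0}^2|_s^t$ are dominated by $E_1(s)+E_1(t)\leq 2E_1(s)$ via trace theory and Poincar\'e; and the cross terms $\int_s^t(\gamma u_{tt},z)\,d\sigma$, $\int_s^t(\gamma u_{tt},h\nabla z)\,d\sigma$ are split by weighted Cauchy--Schwarz,
\[
|(\gamma u_{tt},\varphi)| \leq \varepsilon\|\varphi\|_2^2 + C_{\varepsilon}\|\gamma\|_{\infty}\|\gamma^{1/2}u_{tt}\|_2^2,
\]
whose $\varepsilon$--piece is absorbed back into the left-hand side (Poincar\'e turns $\|\varphi\|_2$ into $\|A^{1/2}z\|_2$). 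The net outcome is $\int_s^t E_1(\sigma)\,d\sigma \lesssim E_1(s)$.

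It remains to recover $E_0$, which involves only $u$. For this I would exploit the defining first-order ODE $bu_t + c^2 u = bz$: testing with $c^2 Au$ yields $\tfrac{c^2}{2}\tfrac{d}{dt}\|A^{1/2}u\|_2^2 + \tfrac{c^4}{b}\|A^{1/2}u\|_2^2 = c^2(A^{1/2}z, A^{1/2}u)$, and Cauchy--Schwarz absorption gives
\[
\|A^{1/2}u(t)\|_2^2 + \int_s^t\|A^{1/2}u\|_2^2\,d\sigma \lesssim E_0(s) + \int_s^t\|A^{1/2}z\|_2^2\,d\sigma \lesssim E(s).
\]
The remaining $\|\alpha^{1/2}u_t\|_2^2$ part of $E_0$ and the $\|\gamma^{1/2}u_t\|_2^2$ part of $E_1$ are then controlled pointwise and in integral by the algebraic identity $u_t = z - (c^2/b)u$ together with $\|\alpha\|_\infty,\|\gamma\|_\infty<\infty$. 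Summing with the previous bound delivers $E(t) + \int_s^t E(\sigma)\,d\sigma \lesssim E(s)$, which is the claim.

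The principal obstacle is the observability step for $z$: the linear combination of \eqref{zmul} and \eqref{hzmult} must yield positive coefficients on both $\|z_t\|_2^2$ and $\|A^{1/2}z\|_2^2$ simultaneously, and the $|\nabla z|^2 h\cdot\nu$ boundary term hidden in the Rellich computation underlying \eqref{hzmult} must be controllable purely by the feedback quantity $\int_{\Gamma_0}|z_t|^2$. This is exactly where Assumption \ref{asgeo} is indispensable: on $\Gamma_1$, $h\cdot\nu\leq 0$ together with the Dirichlet condition $z=0$ makes the $|\nabla z|^2$ contribution of favorable sign and allows it to be dropped, while on $\Gamma_0$ the Robin-type feedback $\partial_\nu z = -\eta z_t$ reduces the normal part of $|\nabla z|^2$ to a multiple of $z_t^2$. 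A secondary but equally important point is uniformity with respect to $\gamma$: because $\gamma$ may degenerate, all lower-order absorptions must be carried out in $\gamma$--independent norms, which is why $\|\gamma\|_\infty$ must enter as a multiplicative constant and never as a weight that might vanish.
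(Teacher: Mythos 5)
Your proposal follows essentially the same route as the paper: the dissipation identity \eqref{e1id} supplies $E_1(t)\leq E_1(s)$ and the budget for $\int_s^t\|\gamma^{1/2}u_{tt}\|_2^2$ and $\int_s^t\|z_t\|_{\Gamma_0}^2$; the weighted combination of \eqref{zmul} and \eqref{hzmult} (the paper takes \eqref{hzmult} plus $(n-1)/2$ times \eqref{zmul}, which for $n=3$ is exactly your plain sum) yields positive coefficients on $\int\|z_t\|_2^2$ and $\int\|A^{1/2}z\|_2^2$ after absorbing the $\varepsilon\overline{\gamma}$ terms; and $E_0$ is recovered from the first--order relation $bu_t+c^2u=bz$. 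The argument is correct and matches the paper's proof in all essentials, with your treatment of the $\|\alpha^{1/2}u_t\|_2^2$ and $\|\gamma^{1/2}u_t\|_2^2$ pieces via $u_t=z-(c^2/b)u$ merely making explicit a step the paper leaves implicit.
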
 \begin{proof} \begin{itemize}
	\item[\bf Step 1.] Take $f=0$ and let $\varepsilon > 0$ to be given. H$\ddot{\mbox{o}}$lder's Inequality\footnote{H$\ddot{\mbox{o}}$lder's Inequality: $\|fg\|_1 \leqslant \|f\|_p\|g\|_q$ for $p,q \in [1,\infty)$ such that $p + q = pq.$ Here we used for $p = q = 2.$} along with Trace Theorem\footnote{Trace Theorem: $\|f\|_{\Gamma}^2 \lesssim \|\p{f}\|_2^2.$} and identity \eqref{e1id} allow the left hand side of identity \eqref{zmul} to be estimated as \begin{align}
\label{zmulin1} \int_s^t\left[b\|\p{z}\|_2^2 -\|z_t\|_2^2\right]d\sigma \lesssim E_1(s) +  \varepsilon \overline{\gamma} \int_s^t\|\p{z}\|_2^2d\sigma,
\end{align} where $\overline{\gamma} = \sup\limits_{x \in \overline{\Omega}} \gamma(x).$ In fact, we estimate the terms on the right side of \eqref{zmul} as follows: \begin{align}
- \int_s^t(\gamma u_{tt},z)d\sigma - \left[(z_t,z)+\dfrac{b}{2}\|z\|_{\Gamma_0}^2\right]\biggr\rvert_s^t & \lesssim  C_\varepsilon \int_s^t \|\gamma^{1/2}u_{tt}\|_2^2 + \varepsilon \overline{\gamma} \int_s^t \|\p{z}\|_2^2 \nonumber \\ &+ E_1(t) + E_1(s)
\end{align} and then, under \eqref{e1id} with $f = 0$, \eqref{zmulin1} follows.\item[\bf Step 2.] Next, recalling that $\max\limits_{x \in \overline{\Omega}} |h(x)| < \infty$ and using again identity \eqref{e1id} we also estimate the left hand side of \eqref{hzmult} as \begin{align}\int_s^t \left[ \dfrac{n}{2} \|z_t\|_2^2 - \dfrac{b(n-2)}{2}\|\p{ z}\|_2^2 \right] d\sigma &\lesssim  E_1(s) +  \varepsilon \overline{\gamma} \int_s^t\|\p{z}\|_2^2d\sigma, \label{hzmultin1}\end{align} for which we have used the fact that, for $f = 0$, identity \eqref{e1id} allows us to control the time integrals $\displaystyle\int_s^t\|z_t\|_{\Gamma_0}^2d\sigma$ and $\displaystyle\int_s^t\|\gamma^{1/2}u_{tt}\|_2^2 d\sigma$  along with 
$$(\gamma u_{tt} , h\nabla z ) \lesssim \epsilon \bar{\gamma}||\nabla z||^2 + C_{\epsilon} ||\gamma^{1/2} u_{tt}||^2 $$

\item[\bf Step 3.] Now notice that adding \eqref{hzmultin1} with $(n-1)/2$ -- times \eqref{zmulin1} gives \begin{align*}
 \int_s^t\left[\dfrac{b}{2}\|\p{z}\|_2^2 +\dfrac{1}{2}\|z_t\|_2^2\right]d\sigma \lesssim E_1(s) +  \varepsilon \overline{\gamma} \int_s^t\|\p{z}\|_2^2d\sigma,
\end{align*} from where it follows, by taking $\varepsilon$ small, that \begin{align}\label{zin}
\int_s^t\left[\|\p{z}\|_2^2 +\|z_t\|_2^2\right]d\sigma \lesssim E_1(s).
\end{align}

\begin{rmk}
	Notice that none of the arguments for Steps 1-3 depend on the requirement that $\gamma > 0.$ Therefore it is valid for $\gamma \geqslant 0.$
\end{rmk}

\item[\bf Step 4.] Finally, from $bz = bu_t + c^2u$ we get, from \eqref{zin} \begin{align}
\label{ueq} \|\p{u}\|_2^2 + \int_s^t\|\p{u}\|_2^2d\sigma \lesssim \int_s^t \|\p{z}\|_2^2d\sigma \lesssim E_1(s),
\end{align} and then \eqref{stab1} follows by adding \eqref{ueq}, \eqref{zin} and \eqref{e1id}. \end{itemize}
This proves the inequality in Theorem \ref{tdatko} valid for classical solutions. The extension to mild solutions follows from the density of the domain of the generator in $\mathbb{H}$ and from 
weak lower-semi-continuity of the energy functions. 
\end{proof}
The proof of the final result in Theorem \ref{t2} follows from Datko's Theorem \cite{pazy_semigroups_1992}. 


		\bibliographystyle{acm} 
		\bibliography{stab_ref.bib}
\end{document}